\newtheorem{thm}{Theorem}[section]
\newtheorem{lem}[thm]{Lemma}
\newtheorem{prop}[thm]{Proposition}
\newtheorem{cor}[thm]{Corollary}
\newtheorem{obs}[thm]{Observation}
\newtheorem*{thm*}{Theorem}
\theoremstyle{definition}
\newtheorem{qn}[thm]{Question}
\theoremstyle{remark}
\newtheorem*{acknowledgement}{Acknowledgments}
\newcommand{\Lo}{\textsf{LOCAL}}
\newcommand{\mad}{\ensuremath{\mathrm{mad}}}
\newcommand{\ch}{\ensuremath{\mathrm{ch}}}
\newcommand{\plog}{\ensuremath{\mathrm{polylog}}}
\renewcommand{\le}{\leqslant}
\renewcommand{\leq}{\leqslant}
\renewcommand{\ge}{\geqslant}
\def\longequation{$$\vcenter\bgroup\advance\hsize by -9em%
\noindent\ignorespaces\refstepcounter{equation}}%
\def\endlongequation{\egroup\eqno(\theequation)$$\global\@ignoretrue}
\begin{document}
\title{Distributed coloring in sparse graphs with
  fewer colors}

\author{Pierre Aboulker} \address{Laboratoire G-SCOP (CNRS,
  Univ. Grenoble Alpes), Grenoble, France}
\email{pierre.aboulker@grenoble-inp.fr}

\author{Marthe Bonamy} \address{LaBRI (CNRS,
  Universit\'e de Bordeaux), Bordeaux, France}
\email{marthe.bonamy@u-bordeaux.fr}

\author{Nicolas Bousquet} \address{Laboratoire G-SCOP (CNRS,
  Univ. Grenoble Alpes), Grenoble, France}
\email{nicolas.bousquet@grenoble-inp.fr}

\author{Louis Esperet} \address{Laboratoire G-SCOP (CNRS, Univ. Grenoble Alpes), Grenoble, France}
\email{louis.esperet@grenoble-inp.fr}

\thanks{An extended abstract of this work  appeared in the ACM
  Symposium on Principles of Distributed Computing (PODC), 2018.\newline
Partially supported by ANR Projects STINT
  (\textsc{anr-13-bs02-0007}) and GATO (\textsc{anr-16-ce40-0009-01}), and LabEx PERSYVAL-Lab
  (\textsc{anr-11-labx-0025}).}

\date{}
\sloppy

\begin{abstract}
This paper is concerned with efficiently coloring sparse graphs in the distributed
setting with as few colors as possible. According to the celebrated
Four Color Theorem, planar graphs can be colored with at most 4
colors, and the proof gives a (sequential) quadratic algorithm finding
such a coloring. A natural problem is to improve this complexity in
the distributed setting. Using the fact that planar graphs contain linearly
many vertices of degree at most 6, Goldberg, Plotkin, and Shannon
obtained a deterministic distributed algorithm coloring $n$-vertex
planar graphs with 7 colors in $O(\log n)$ rounds. 
Here, we show how
to color planar graphs with 6 colors in $\plog(n)$ rounds. Our
algorithm indeed works more generally in the list-coloring setting and
for sparse graphs (for such graphs we improve by at least one the number of colors
resulting from an
efficient algorithm of Barenboim and Elkin, at the expense of a slightly worst complexity). Our bounds on the
number of colors turn out to be quite sharp in general. Among other results, we show
that no distributed algorithm can color every $n$-vertex
planar graph with 4 colors in $o(n)$ rounds.
\end{abstract}
\maketitle

\section{Introduction}

\subsection{Coloring sparse graphs}

This paper is devoted to the graph coloring problem in the distributed
model of computation. Graph coloring plays a central role in
distributed algorithms, see the recent survey book of Barenboim and
Elkin~\cite{BE13} for more details and further references. Most
of the research so far has focused on obtaining fast algorithms for
coloring graphs of maximum degree $\Delta$ with $\Delta+1$ colors, or
to allow more colors in order to obtain more efficient algorithms. Our
approach here is quite the opposite. Instead, we are interested in proving ``best possible'' results (in
terms of the number of colors), in a reasonable (say polylogarithmic)
round complexity. By ``best possible'', we mean results that match
the best known existential bounds or the best known bounds following
from efficient sequential algorithms. A
typical example is the case of planar graphs. The famous Four Color
Theorem ensures that these graphs are 4-colorable (and the proof actually yields
a quadratic algorithm), but coloring them using so few colors with an efficient
distributed algorithm has remained elusive. Goldberg, Plotkin, and
Shannon~\cite{GPS88} (see also~\cite{BE10})
obtained a deterministic distributed algorithm coloring $n$-vertex
planar graphs with 7 colors in $O(\log n)$ rounds, but it was not
known\footnote{In~\cite{BE10}, it is mentioned that a parallel
  algorithm of~\cite{GPS88} that 5-colors plane graphs (embedded planar
graphs) can be extended to the distributed setting, but this does not
seem to be correct, since the algorithm relies on edge-contractions and some
clusters might correspond to connected subgraphs of diameter linear
in the order of the original graph. The authors of~\cite{BE10}
acknowledged (private communication) that consequently, the problem of coloring planar graphs with 6-colors in polylogarithmic time was still open.} whether a polylogarithmic 6-coloring algorithm exists for
planar graphs.

\medskip

In this paper we give a simple deterministic distributed 6-coloring
algorithm for planar graphs, of round complexity $O(\log^3 n)$. In
fact, our algorithm works in the more general list-coloring setting, where each
vertex has its own list of $k$ colors (not necessarily integers from 1
to $k$). The algorithm also works more generally for sparse graphs. Here, we
consider the maximum average degree of a graph (see below for precise
definitions) as a sparseness measure. It seems to be better suited for
coloring problems than arboricity, which had been previously
considered~\cite{BE10,GL17}.



\medskip

To state our result more precisely, we start with some 
definitions and classic results on graph coloring.

\subsection{Definitions}

A \emph{coloring} of a graph $G$ is an assignment of colors to the vertices of $G$ so that adjacent vertices are
assigned different colors. The \emph{chromatic number} of $G$, denoted by
$\chi(G)$, is the minimum integer $k$ so that $G$ has a
coloring using colors from $1,\ldots,k$. In this paper it will also be convenient to consider the
following variant of graph coloring. A family of lists
$(L(v))_{v\in G}$ is said to be a \emph{$k$-list-assignment}
if $|L(v)|\ge k$ for every vertex $v\in G$. Given such a
list-assignment $L$, we say that $G$ is
\emph{$L$-list-colorable} if $G$ has a coloring $c$ such that for every
vertex $v\in G$, $c(v)\in L(v)$. We also say that $G$ is
\emph{$k$-list-colorable}, if for any
$k$-list-assignement $L$, the graph $G$ is $L$-list-colorable. By a
slight abuse of notation, we will sometimes write that our algorithm
\emph{finds a $k$-list-coloring of $G$}. This should be understood as:
\emph{for any $k$-list-assignment $L$, our algorithm finds an
  $L$-list-coloring of $G$}. 

The \emph{choice number} of $G$,
denoted by $\ch(G)$, is the minimum integer $k$ such that $G$ is $k$-list-colorable. Note that if all
lists $L(v)$ are equal, then $G$ is $L$-list-colorable if and only if
it is $|L(v)|$-colorable, and thus $\chi(G)\le \ch(G)$ for any graph
$G$. On the other hand, it is well known that complete bipartite
graphs (and more generally graphs with arbitrary large average degree)
have arbitrary large choice number.

\medskip

The \emph{average degree} of a graph $G=(V,E)$ is defined as the average of the
degrees of the vertices of $G$ (it is equal to 0 if $V$ is empty and to $2|E|/|V|$ otherwise). The \emph{maximum average degree} of a graph $G$, denoted by
$\mad(G)$, is the maximum of the average degrees of the
subgraphs of $G$. The maximum average degree is a
standard measure of the sparseness of a graph. Note that if a graph
$G$ has $\mad(G)<k$, for some integer $k$, then any subgraph of $G$
contains a vertex of degree at most $k-1$, and in particular a simple
greedy algorithm shows that $G$ has (list)-chromatic number at most
$k$. Therefore, for any graph $G$, $\chi(G)\le \ch(G)\le \lfloor
\mad(G)\rfloor+1$. This bound can be slightly improved when $G$ does
not contain a simple obstruction (a large clique), as will be explained below.

\subsection{Previous results}\label{sec:prevr}

Most of the research on distributed coloring of sparse graphs so
far~\cite{BE10,GL17} has
focused on a different sparseness parameter: The \emph{arboricity} of a
graph $G$, denoted by $a(G)$, is the minimum number of edge-disjoint
forests into which the edges of $G$ can be partitioned. By a classic
theorem of Nash-Williams~\cite{NW64}, we have

$$a(G)=\max \left\{ \left\lceil \tfrac{|E(H)|}{|V(H)|-1}\right\rceil \, | \, H \subseteq G,
  |V(H)|\ge 2\right\}.$$

From this result, it is not difficult to show that for any graph $G$,
$2a(G)-2\le \lceil \mad(G) \rceil\le 2a(G)$ (the lower bound is
attained for graphs whose maximum average degree is an even integer).

\medskip

In~\cite{BE10}, Barenboim and Elkin gave, for any $\epsilon>0$, a
deterministic distributed algorithm coloring $n$-vertex graphs of
arboricity $a$ with $\lfloor (2+\epsilon)a\rfloor+1$ colors in
$O(\tfrac{a}{\epsilon} \log n)$ rounds. In particular, their
algorithm colors $n$-vertex graphs of
arboricity $a$ with $2a+1$ colors in
$O(a^2 \log n)$ rounds.

However it is not difficult to prove that graphs with
arboricity $a$ are $(2a-1)$-degenerate (meaning that every subgraph contains a
vertex of degree at most $2a-1$), and thus $2a$-colorable, which is sharp. A natural
question is whether there is a fundamental barrier for obtaining an
efficient distributed algorithm coloring graphs of arboricity $a$ with
$2a$ colors. It turns out that there is such a barrier when $a=1$,
i.e.\ when $G$ is a tree: It was proved by Linial~\cite{L92} that
coloring a path (and thus a tree) with two colors requires a linear
number of rounds. Yet, our main result will easily imply that the
case $a=1$ is an exception: when
$a\ge 2$, there is a fairly simple distributed algorithm running in
$O(a^4 \log^3 n)$ rounds, that colors graphs of arboricity $a$ with
$2a$ colors.

\subsection{Brooks theorem, Gallai trees, and list-coloring}

A classic theorem of Brooks states that any connected graph of maximum degree
$\Delta$ which is not an odd cycle or a clique has chromatic number at
most $\Delta$. This improves the simple bound of $\Delta+1$ obtained
from the greedy coloring algorithm. While most of the research in
coloring in the distributed computing setting has focused on
$(\Delta+1)$-coloring, Panconesi and Srinivasan~\cite{PS95} gave a
$O(\Delta \log^3 n/\log \Delta)$ deterministic distributed algorithm that given a connected graph $G$ of maximum
degree $\Delta\ge 3$ finds a clique $K_{\Delta+1}$ or a
$\Delta$-coloring. In Section~\ref{sec:cons} we show how to simply
derive a list-version of this result from our main result, at the cost
of an increased dependence in $\Delta$ (see Corollary~\ref{cor:brooks}).

\medskip

\begin{figure}[htbp]
\begin{center}
\includegraphics[width=6cm]{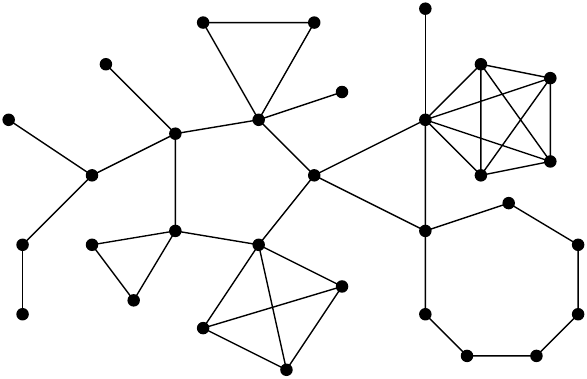}
\caption{A Gallai tree.}
\label{fig:gtree}
\end{center}
\end{figure}

A \emph{block} of a graph $G$ is a maximal 2-connected subgraph of $G$.
A \emph{Gallai tree} is a connected graph in which each block
is an odd cycle or a clique, see Figure~\ref{fig:gtree} for an example. Note that a
tree is also a Gallai tree, since each block of a tree is an edge
(i.e. a
clique on two vertices). The degree of a vertex $v$ in a graph $G$ is
denoted by $d_G(v)$.
The proof of our main result is mainly based on the following classic theorem in graph theory proved independently
by Borodin~\cite{Bor77} and Erd\H{o}s, Rubin, and Taylor~\cite{ERT79},
extending Brooks theorem (mentioned above) to the list-coloring
setting. 

\begin{thm}[\cite{Bor77,ERT79}]\label{thm:Gallai}
If a connected graph $G$ is not a Gallai tree, then for any
list-assignment $L$ such that for every vertex $v\in G$, $|L(v)|\ge
d_G(v)$, $G$ is $L$-list-colorable.
\end{thm}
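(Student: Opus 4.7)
The plan is to argue by strong induction on $|V(G)|$; the base cases are immediate. Fix a connected non-Gallai-tree $G$ and a list assignment $L$ with $|L(v)|\ge d_G(v)$ for every $v$.

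\emph{Step 1 (strict-inequality reduction).} If some vertex $v_0$ satisfies $|L(v_0)|>d_G(v_0)$, order the vertices by reverse BFS from $v_0$, so that $v_0$ is last. Each $v\ne v_0$ has its BFS parent appearing later, hence at most $d_G(v)-1<|L(v)|$ earlier-colored neighbors, and a greedy sweep succeeds; finally, $v_0$ is blocked by at most $d_G(v_0)<|L(v_0)|$ colors. So from now on assume the \emph{tight case}: $|L(v)|=d_G(v)$ for every $v$.

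\emph{Step 2 (2-connected core).} First treat $G$ 2-connected, which is then neither an odd cycle nor a clique. A classical structural lemma (Rubin's block-chain lemma) furnishes $u,v,w\in V(G)$ with $uv,vw\in E$, $uw\notin E$, and $G-\{u,w\}$ connected. If $L(u)\cap L(w)\ne\emptyset$, pick $c$ in the intersection, set $\phi(u)=\phi(w)=c$, and define $L'(x):=L(x)\setminus\{c\}$ for $x$ adjacent to $u$ or $w$, else $L'(x):=L(x)$. Then $|L'(x)|\ge d_{G-\{u,w\}}(x)$ everywhere, with strict inequality at $v$ (two lost neighbors, one lost color), so Step 1 extends $\phi$. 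If $L(u)\cap L(w)=\emptyset$, pick any $a\in L(u)$ and $b\in L(w)$, color $u,w$ with $a,b$, and apply the induction hypothesis to $G-\{u,w\}$ with lists reduced by $\{a,b\}$ on common neighbors and by one of $a,b$ on single-sided neighbors; the degree condition is preserved.

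\emph{Step 3 (block-tree reduction).} If $G$ has cut vertices, fix any 2-connected non-Gallai block $B$ (which exists because $G$ is not a Gallai tree) and root the block tree at $B$. For any leaf-block $B_0\ne B$ with cut vertex $v^{\star}$, the graph $G':=G-(V(B_0)\setminus\{v^{\star}\})$ is connected, smaller, still contains $B$ (hence non-Gallai), and satisfies $d_{G'}(v^{\star})<d_G(v^{\star})=|L(v^{\star})|$. Induction yields a coloring of $G'$; the slack at $v^{\star}$ from Step 1 provides several candidate colors for $c(v^{\star})$, and one picks one for which the extension to $B_0\setminus\{v^{\star}\}$ (an odd cycle or a clique punctured at $v^{\star}$) is list-colorable from the residual lists.

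\emph{Main obstacle.} The technical heart lies in the disjoint-lists sub-case of Step 2: the reduction preserves tightness rather than producing strict slack at $v$, so induction requires $G-\{u,w\}$ to remain outside the Gallai-tree family, which in turn demands a strengthened version of the structural lemma. A parallel subtlety arises in Step 3 when $B_0$ is a clique: the punctured clique $B_0\setminus\{v^{\star}\}$ again carries tight lists, forcing the color assigned to $v^{\star}$ to be coordinated with the extension. Managing these ``tight-on-both-sides'' configurations — either by a sharper choice of $u,v,w$ guaranteeing non-Gallai residuals, or by coupling the coloring of $G'$ with the constraints imposed by $B_0$ — is the main delicate point of the argument.
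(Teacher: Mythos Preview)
The paper does not supply its own proof of this theorem: Theorem~\ref{thm:Gallai} is quoted from \cite{Bor77,ERT79} and used as a black box (in the proof of Theorem~\ref{thm:folk} and in Section~\ref{sec:ext}). So there is no in-paper argument to compare against.

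Judged on its own, your write-up is an accurate outline of the standard strategy, but it is not a proof, and your ``Main obstacle'' paragraph already names the two places where it breaks. In the disjoint-lists branch of Step~2, passing to $G-\{u,w\}$ with the reduced lists leaves \emph{every} vertex tight, so the induction hypothesis is available only if $G-\{u,w\}$ is again not a Gallai tree; the plain form of Rubin's lemma does not guarantee this, and it can genuinely fail for the $u,v,w$ it produces. In Step~3 with $B_0=K_k$, after colouring $G'$ the residual problem on $B_0\setminus\{v^\star\}$ is $K_{k-1}$ with lists of size $k-2$, which is not always list-colourable (take all lists equal); the slack at $v^\star$ from Step~1 only tells you that at least $k-1$ colours remain for $v^\star$ once the rest of $G'$ is fixed, and this does not by itself exclude the bad case where those $k-1$ colours coincide with $\bigcap_{x\in B_0\setminus\{v^\star\}}L(x)$. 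The published proofs close both gaps (for instance by showing one can always arrange a common colour on $u$ and $w$, or by analysing a minimal non-degree-choosable graph directly), but additional argument is required; as written, what you have is a roadmap with its hard steps correctly located, not yet a proof.
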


It is not difficult to prove that Theorem~\ref{thm:Gallai} implies
Brooks theorem. We mentioned above that for any graph $G$, $\chi(G)\le \ch(G)\le \lfloor
\mad(G)\rfloor+1$. Let us now see how this can be slightly improved using
Theorem~\ref{thm:Gallai} if we
exclude a simple obstruction, in the spirit of Brooks theorem.

\begin{thm}[Folklore]\label{thm:folk}
Let $G$ be a graph and let $d=\lceil\mad(G)\rceil$. If $d\ge 3$ and $G$ does not contain any
$(d+1)$-clique, then $\chi(G)\le ch(G)\le d$.
\end{thm}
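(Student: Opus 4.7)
The plan is to prove the list-coloring statement $\ch(G)\le d$ (which immediately implies $\chi(G)\le d$) by induction on $|V(G)|$. Fix a $d$-list-assignment $L$, and work with a connected component $H$ of $G$; since any subgraph of $G$ satisfies the hypotheses, it suffices to treat $H$.

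First, if some vertex $v$ of $H$ has $d_H(v)<d$, remove $v$, apply induction to $H-v$ to obtain an $L$-list-coloring, and extend to $v$: its list has at least $d$ colors and only at most $d-1$ of them are forbidden by already-colored neighbors. So we may assume every vertex of $H$ has degree at least $d$. Combined with $\mad(G)\le d$, applied to the subgraph $H$, this forces the average degree of $H$ to be exactly $d$, so $H$ is $d$-regular.

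Now apply Theorem~\ref{thm:Gallai}: if $H$ is not a Gallai tree, then since $|L(v)|\ge d=d_H(v)$ for every vertex $v$, $H$ is $L$-list-colorable and we are done. It remains to rule out the possibility that $H$ is a Gallai tree. For this, I would analyze the block structure of $H$. If $H$ consists of a single block, then $H$ is either an odd cycle (impossible, as that would force $d=2<3$) or a clique, which would be $K_{d+1}$ since $H$ is $d$-regular — ruled out by hypothesis. Otherwise, pick a leaf block $B$ of $H$ containing a unique cut vertex $v$; every vertex of $B\setminus\{v\}$ has all its neighbors inside $B$, so has degree exactly $d$ in $B$. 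Again $B$ is an odd cycle (forcing $d=2$) or a clique (forcing $B=K_{d+1}$), both contradictions.

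The main obstacle is the Gallai-tree case, but it is resolved entirely by the leaf-block analysis above, which is where both hypotheses $d\ge 3$ and ``no $K_{d+1}$'' enter. The rest is a textbook reduction: degeneracy for the low-degree case, and regularity plus Theorem~\ref{thm:Gallai} otherwise.
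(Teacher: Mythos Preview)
Your proof is correct and follows essentially the same approach as the paper: induction on $|V(G)|$, reduction to a $d$-regular connected graph via the low-degree case, and then Theorem~\ref{thm:Gallai} together with a leaf-block analysis showing that a $d$-regular Gallai tree with $d\ge 3$ must be $K_{d+1}$. The only cosmetic difference is that the paper rules out a leaf block with a cut-vertex by noting the cut-vertex would have strictly larger degree (violating regularity), whereas you directly compute $|B|=d+1$ to obtain the forbidden clique; both arguments are equivalent.
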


\begin{proof}
We prove the result by induction on the number of vertices of $G$. We
can assume that $G$ is connected, since otherwise we can consider each connected
component separately. If
$G$ contains a vertex $v$ of degree at most $d-1$, we remove it, color the
graph by induction, and then choose for $v$ a color that does not
appear on any of its neighbors. Hence, we can assume that $v$ has
minimum degree at least $d$. Since $G$ has average degree at most $d$,
$G$ is $d$-regular. Note that the only $d$-regular Gallai trees (with $d\ge 3$)
are the $(d+1)$-cliques\footnote{This can be checked by considering a
  leaf block $B$ of the Gallai tree: $B$ is either a cycle (in which
case the Gallai tree contains a vertex of degree 2) or a clique. If
this clique contains a cut-vertex $v$, then the degree of $v$ is
larger than the degree of the other vertices of the clique, and thus the
graph is not regular.}, and thus it
follows from Theorem~\ref{thm:Gallai} that $G$ is $d$-list-colorable.
\end{proof}

\subsection{Our results}

Our main result is an efficient algorithmic counterpart of
Theorem~\ref{thm:folk} in the \Lo\ model of computation~\cite{L92}, which is standard in
distributed graph algorithms. Each node of an $n$-vertex graph $G$ has
a unique identifier (an integer between $1 $ and $n$), and can
exchange messages with its neighbors during synchronous
rounds. In the \Lo\ model, there is no bound on the size of the
messages, and nodes have infinite computational power. Initially, each node only knows its own identifier, as well as $n$
(the number of vertices) and sometimes some other parameters: in
Theorem~\ref{thm:mad} below, each node knows its own list of $d$ colors (in the
list-coloring setting), or simply the integer $d$ (if we are merely
interested in coloring the graph with colors from $1$ to $d$ and there
are no lists involved). With this information, each vertex has to
output its own color in a proper coloring of the graph $G$. The
\emph{round complexity} of the algorithm is the number of rounds
it takes for each vertex to choose a color. In the \Lo\
model of computation, the output of each vertex $v$ only depends on
the labelled ball of radius $r$ of $v$, where $r$ is the round
complexity of the algorithm. In particular, in this model any problem on $G$ can be solved in a number of rounds
that is linear in the diameter of $G$, and thus the major problem is to
obtain bounds on the round complexity that are significantly better
than the diameter. The reader is referred to the survey book of Barenboim and
Elkin~\cite{BE13} for more on coloring algorithms in the  \Lo\
model of computation.

\begin{thm}[Main result]\label{thm:mad}
There is a deterministic distributed algorithm that given an $n$-vertex graph $G$,
and an integer $d\ge \max(3,\mad(G))$, either finds a
$(d+1)$-clique in $G$, or finds a $d$-list-coloring of $G$ in
$O(d^4\log^3 n)$ rounds. Moreover, if every vertex has degree at most $d$, then
the algorithm runs in $O(d^2\log^3 n)$ rounds.
\end{thm}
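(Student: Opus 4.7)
The strategy is to convert the inductive proof of Theorem~\ref{thm:folk} into a distributed algorithm, processing the graph in a peel-and-color manner. In $\plog(n)$ rounds I would construct a partition $V(G) = U_1 \cup U_2 \cup \cdots \cup U_r \cup C$ with $r = O(\log n)$ layers and a (possibly empty) residue $C$, so that every $v \in U_i$ has at most $d - 1$ neighbors in $U_i \cup U_{i+1} \cup \cdots \cup U_r \cup C$, while $C$ has componentwise minimum degree at least $d$. Since $\mad(G) \le d$, the residue $C$ is forced to be componentwise $d$-regular. The decomposition would be obtained by an iterated H-partition \`a la Barenboim--Elkin~\cite{BE10}, tuned to the stricter forward-degree target $d-1$ by repeatedly extracting a distributed MIS among currently low-degree vertices; a constant fraction of the non-core vertices is peeled at each outer phase, giving $O(\log n)$ phases.

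After the decomposition is built I color in reverse. First the residue $C$: each component is $d$-regular, and either is a $(d+1)$-clique (which the algorithm outputs and halts), or is not a Gallai tree since $d \ge 3$ (by the observation in the proof of Theorem~\ref{thm:folk} that the only $d$-regular Gallai trees are $(d+1)$-cliques), so Theorem~\ref{thm:Gallai} supplies a $d$-list-coloring. Realizing this distributedly is the heart of the proof: I would follow the Panconesi--Srinivasan~\cite{PS95} paradigm, computing a sparse network decomposition of $C$ and assimilating clusters one color class at a time, extending the coloring via a local constructive witness of Theorem~\ref{thm:Gallai}. This is what accounts for the $\log^3 n$ factor. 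Next, the layers $U_r, U_{r-1}, \ldots, U_1$: once $U_{i+1} \cup \cdots \cup U_r \cup C$ is colored, each $v \in U_i$ has a residual list of size at least $d - (d-1-\deg_{G[U_i]}(v)) = \deg_{G[U_i]}(v) + 1$, strictly more than its degree inside the layer, so $G[U_i]$ is list-colorable by a standard degeneracy argument, implemented distributedly in $O(\log n)$ rounds per layer; the total cost of this back-substitution phase is dominated by the coloring of $C$.

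The max-degree-$\leq d$ case (the second assertion of the theorem) is precisely the degenerate instance $r = 0$, $C = G$: the peeling stage is skipped entirely and the round complexity collapses to that of the core-coloring subroutine, yielding $O(d^2 \log^3 n)$. The extra $d^2$ factor in the general bound accumulates through the MIS-based peeling, which has to cope with vertices of arbitrarily large degree. I expect the main obstacle to be the distributed realization of Theorem~\ref{thm:Gallai} on the residue $C$: the existential argument of Borodin and of Erd\H{o}s--Rubin--Taylor locates a short augmenting structure (an even cycle, or an odd cycle with a chord) inside any non-Gallai-tree component, and the delicate point is to locate such a structure and to use it to extend the coloring entirely within a single network-decomposition cluster, without communication across clusters.
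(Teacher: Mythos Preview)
Your peeling phase does not work: the claim that $r=O(\log n)$ layers suffice is false. Take $d=4$ and let $G$ be the square of a path, i.e.\ $V(G)=\{v_0,\dots,v_n\}$ with $v_iv_{i+1},v_iv_{i+2}\in E(G)$. Then $\mad(G)<4$, $G$ is $K_5$-free, and every interior vertex has degree exactly~$4$. The only vertices with degree $\le d-1=3$ are $v_0,v_1,v_{n-1},v_n$, so $|U_1|\le 4$; after their removal the only new vertices of degree $\le 3$ are $v_2,v_3,v_{n-3},v_{n-2}$, and so on. Any partition satisfying your forward-degree condition has $r=\Omega(n)$. The Barenboim--Elkin H-partition achieves $O(\log n)$ layers precisely because it aims for forward degree $(2+\epsilon)a$, with the $\epsilon$ slack guaranteeing that a constant fraction of vertices are below threshold; at the tight target $d-1$ there is no such guarantee.

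This is exactly the obstacle the paper is built around. Instead of peeling vertices of low \emph{degree}, the paper peels vertices that are \emph{happy}: rich vertices whose rich ball of radius $c\log n$ either contains a vertex of degree $\le d-1$ or is not a Gallai tree. The core technical contribution (Lemma~\ref{lem:happy}, proved via a Moore-bound argument on an auxiliary graph) is that a $1/\mathrm{poly}(d)$ fraction of the vertices are always happy, which yields $O(d^3\log n)$ peeling rounds. In the square-of-a-path example every ball of radius~$2$ already contains a $4$-cycle, so every vertex is happy and the paper's algorithm finishes in one peeling round. Your residue-coloring step has a related difficulty: you propose to colour the $d$-regular residue cluster-by-cluster via network decomposition and Theorem~\ref{thm:Gallai}, but an individual cluster of $C$ can perfectly well induce a Gallai tree (say a short path inside a long $d$-regular graph) with every vertex having residual list of size exactly its cluster-degree, so Theorem~\ref{thm:Gallai} gives nothing. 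Panconesi--Srinivasan escape this in the non-list setting by Kempe swaps, which are unavailable here; the paper instead uses ruling forests rooted at happy vertices, so that each root's ball is certifiably non-Gallai and can absorb the final extension.
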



Noting that graphs of arboricity $a$ have maximum average degree at
most $2a$ and no clique on $2a+1$ vertices, we obtain the following
result as an immediate consequence.

\begin{cor}\label{cor:arbo}
There is a deterministic distributed algorithm that given an
$n$-vertex graph $G$ of arboricity $a\ge 2$, finds a $2a$-list-coloring of $G$ in
$O(a^4\log^3 n)$ rounds.
\end{cor}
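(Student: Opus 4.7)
The plan is to invoke Theorem~\ref{thm:mad} with $d = 2a$ and verify that the two hypotheses are satisfied, after which the conclusion about a $(d+1)$-clique can be ruled out by an elementary arboricity computation.

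First I would check the degree condition. Since $a \ge 2$, we have $d = 2a \ge 4 \ge 3$. The relation $\lceil \mad(G)\rceil \le 2\,a(G)$ coming from the Nash--Williams formula (mentioned in Section~\ref{sec:prevr}) gives $\mad(G) \le 2a = d$, so indeed $d \ge \max(3, \mad(G))$, and Theorem~\ref{thm:mad} applies in $O(d^4 \log^3 n) = O(a^4 \log^3 n)$ rounds.

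Next I would rule out the $(d+1)$-clique outcome. The algorithm of Theorem~\ref{thm:mad} either returns a $2a$-list-coloring of $G$ or returns a copy of $K_{2a+1}$ in $G$. But by the Nash--Williams formula applied to $H = K_{2a+1}$,
\[
a(K_{2a+1}) \ge \left\lceil \frac{|E(K_{2a+1})|}{|V(K_{2a+1})|-1}\right\rceil = \left\lceil \frac{a(2a+1)}{2a}\right\rceil = \left\lceil a + \tfrac{1}{2}\right\rceil = a+1,
\]
so $K_{2a+1}$ cannot be a subgraph of a graph of arboricity $a$. Therefore the second branch of the algorithm never fires, and we obtain a $2a$-list-coloring in $O(a^4 \log^3 n)$ rounds as claimed.

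There is no real obstacle here: the corollary is essentially a one-line consequence of the main theorem once one notes that the two algebraic bounds (maximum average degree $\le 2a$ and absence of $K_{2a+1}$) both follow directly from the Nash--Williams characterisation of arboricity. The only subtlety is that $a \ge 2$ is needed to meet the hypothesis $d \ge 3$ of Theorem~\ref{thm:mad}; the case $a = 1$ (trees) is genuinely excluded because, as recalled in the introduction, $2$-coloring a path requires $\Omega(n)$ rounds by Linial's lower bound.
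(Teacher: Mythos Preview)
Your proof is correct and follows exactly the approach the paper indicates: it states just before the corollary that ``graphs of arboricity $a$ have maximum average degree at most $2a$ and no clique on $2a+1$ vertices,'' and then declares the result an immediate consequence of Theorem~\ref{thm:mad}. Your version simply spells out these two facts via the Nash--Williams formula and verifies the hypothesis $d\ge 3$, which is precisely why $a\ge 2$ is assumed.
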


Before we discuss other consequences of our result, let us first discuss
its tightness. First, Corollary~\ref{cor:arbo} improves the result of Barenboim and Elkin~\cite{BE10} mentioned
above by at least one color in general, and Theorem~\ref{thm:mad}
improves it by at least 3 colors in some cases (for instance for
graphs whose maximum average degree is an even integer), and both
results are best possible in general in terms of the number of
colors (already from an existential point of view). On the other hand,
the round complexity of our algorithm is slightly
worst, but a classic result of Linial~\cite{L92} shows that trees
cannot be colored in $o(\log n)$ rounds with any constant number of
colors, and this implies that even for fixed $d$ or $a$, the round
complexity in Theorem~\ref{thm:mad} and Corollary~\ref{cor:arbo}
cannot be replaced by $o(\log n)$. Second, another classic
result of Linial~\cite{L92} showing that $n$-vertex paths cannot be 2-colored by
a distributed algorithm using $o(n)$ rounds, also shows that we cannot
omit the assumption that $d\ge 3$ in the statement of
Theorem~\ref{thm:mad} and the assumption that $a\ge 2$ in the statement
of Corollary~\ref{cor:arbo}.

\medskip

We also note that using network decompositions~\cite{PS96}, we can
replace the $O(d^4\log^3 n)$ round complexity in Theorem~\ref{thm:mad}
by $d^3 2^{O(\sqrt{\log n})}$, and the $O(d^2\log^3 n)$ round complexity
by $d 2^{O(\sqrt{\log n})}$ (the multiplicative factor of $d$ is
saved similarly as in~\cite{PS95}). These alternative bounds are not very
satisfying, and in most of the applications we have in mind $d$ is a
constant anyway, so we omit the details. It remains interesting to
obtain a bound on the round complexity that is sublinear in $n$ regardless of the value of $d$.

\medskip

In Section~\ref{sec:cons} we explore various consequences of our
result. In particular, we prove that it gives a 6-(list-)coloring algorithm
for planar graphs
in $O(\log^3 n)$ rounds. On the other hand, we show that an efficient
Four Color Theorem cannot be expected in the distributed setting.

\begin{thm}\label{th:lplan}
No distributed algorithm can 4-color 
every $n$-vertex planar graph in $o(n)$ rounds. 
\end{thm}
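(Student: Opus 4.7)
The plan is to reduce distributed 2-coloring of an $n$-vertex path to distributed 4-coloring of an $O(n)$-vertex planar graph, and then invoke Linial's $\Omega(n)$ lower bound for 2-coloring paths in the \Lo\ model mentioned in Section~\ref{sec:prevr}.

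The planar graph $G_n$ is built as follows. Start from the path $v_1 v_2 \cdots v_n$, and for each $i \in \{1, \ldots, n-2\}$ add a gadget consisting of three new vertices $a_i, b_i, c_i$ forming a triangle, each of them adjacent to both $v_i$ and $v_{i+2}$; so $\{v_i, v_{i+2}, a_i, b_i, c_i\}$ induces a copy of $K_5$ minus the edge $v_i v_{i+2}$, which is planar. Placing these gadgets in thin strips alternately above and below the path yields a planar embedding of $G_n$ on $4n-6$ vertices. The key property is that in any 4-coloring, the triangle $a_i b_i c_i$ already uses three of the four available colors, so both $v_i$ and $v_{i+2}$ are forced to take the fourth one; in particular, $v_i$ and $v_{i+2}$ receive the same color. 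Combined with $c(v_i) \neq c(v_{i+1})$ along the path, this forces the path to be properly 2-colored, with a single color $\alpha$ appearing on all odd-indexed vertices and a distinct color $\beta$ on all even-indexed ones.

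Now assume for contradiction that some algorithm $A$ 4-colors every $N$-vertex planar graph in $r(N) = o(N)$ rounds. Given a path $P_n$ with vertices $p_1, \ldots, p_n$ and distinct identifiers, I would define a 2-coloring algorithm $A'$ on $P_n$ as follows. Each $p_i$ first gathers its $r(N)$-ball in $P_n$ for $N = 4n-6$; from the identifiers of the path vertices in this ball, it deterministically derives the identifiers of the nearby gadget vertices (by a fixed injection from unordered pairs of path identifiers together with an index in $\{1,2,3\}$ into a fresh identifier range), and thus reconstructs its full $r(N)$-ball in $G_n$. It then simulates $A$ to compute both its own color $c_i$ and the color $c_{i+1}$ of the next path vertex (using $c_{i-1}$ instead at the endpoint $i=n$). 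Since $\{c_i, c_{i+1}\} = \{\alpha,\beta\}$, the vertex $p_i$ outputs $1$ if $c_i < c_{i+1}$ and $2$ otherwise. One easily checks that this produces a valid 2-coloring of $P_n$ in $O(r(N)) = o(n)$ rounds, contradicting Linial's $\Omega(n)$ lower bound for 2-coloring paths.

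The only delicate point is making sure that the gadget identifiers simulated independently by $v_i$ and $v_{i+2}$ coincide, and that they lie in an identifier range admissible for the \Lo\ model; since each gadget is of constant size and shares only two vertices with the rest of $G_n$, this is a routine bookkeeping matter.
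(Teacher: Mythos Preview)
Your reduction idea is natural, but the graph $G_n$ you build is \emph{not planar}. Already for $n=3$, contracting the path edge $v_1v_2$ collapses $\{v_1,v_2,v_3,a_1,b_1,c_1\}$ to a $K_5$, so $G_3$ (and hence every $G_n$ with $n\ge 3$) has a $K_5$ minor. The underlying obstruction is that $K_5$ minus an edge is a $3$-connected planar graph in which every face is a triangle; consequently the two non-adjacent vertices $v_i$ and $v_{i+2}$ can never lie on a common face, and there is no way to route the path through $v_{i+1}$ (or to attach anything else to both $v_i$ and $v_{i+2}$) without a crossing. The ``thin strips alternately above and below the path'' picture is therefore impossible to realise. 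More generally, no planar gadget with its two terminals $v_i,v_{i+2}$ on the outer face can force $c(v_i)=c(v_{i+2})$ in every $4$-coloring: adding the edge $v_iv_{i+2}$ would keep the gadget planar while destroying all $4$-colorings, contradicting the Four Color Theorem. So this style of reduction to $2$-coloring a path cannot work.

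The paper argues differently. It invokes a theorem of Fisk stating that any triangulation of a surface with exactly two vertices of odd degree (and these two adjacent) is not $4$-colorable. Such triangulations exist on the torus for infinitely many $n$, and in each of them every ball of radius $\Omega(n)$ induces a planar graph. Observation~\ref{obs:linial} (Linial's indistinguishability argument) then yields the $\Omega(n)$ lower bound for $4$-coloring planar graphs directly, with no simulation or identifier bookkeeping required.
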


In Section~\ref{sec:overview}, we give an overview of our algorithm. The proof
of its correctness (and of its round complexity)
follows from Lemmas~\ref{lem:happy}  and~\ref{lem:ext}, that are respectively proved in Sections~\ref{sec:happy}  and~\ref{sec:ext}.

\section{Consequences of our main result}\label{sec:cons}

It was proved by Panconesi and Srinivasan~\cite{PS95} that there
exists a deterministic distributed  algorithm of round complexity $O(\tfrac{\Delta}{\log \Delta}\log^3 n)$ that given
an $n$-vertex graph of maximum degree $\Delta\ge 3$ distinct from a
clique on $\Delta+1$ vertices, finds a $\Delta$-coloring of $G$. Note
that the following list-version of their result can be deduced as a simple corollary of our
main result.

\begin{cor}\label{cor:brooks}
There is a deterministic distributed  algorithm of round complexity
$O(\Delta^2 \log^3 n)$ that given any $n$-vertex graph of
maximum degree $\Delta\ge 3$, and any $\Delta$-list-assignment $L$ for
the vertices of $G$,
either finds an $L$-list-coloring of $G$, or finds that no such
coloring exists.
\end{cor}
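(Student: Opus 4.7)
The plan is to apply Theorem~\ref{thm:mad} with $d=\Delta$ on $G$. Since $G$ has maximum degree $\Delta$, we have $\mad(G)\le\Delta$ and every vertex has degree at most $d$, so the hypothesis $d\ge\max(3,\mad(G))$ is satisfied and the faster $O(\Delta^2\log^3 n)$ bound of Theorem~\ref{thm:mad} applies. The only subtlety is handling the case where the algorithm returns a $(\Delta+1)$-clique instead of a coloring: we then need to decide whether $G$ admits an $L$-list-coloring, and, if so, to produce one.

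The key structural observation is that any $(\Delta+1)$-clique $K$ in a graph of maximum degree $\Delta$ is an entire connected component, since each vertex of $K$ already has $\Delta$ neighbors inside $K$ and can have no other neighbors. This means $K$ can be treated in complete isolation from the rest of $G$, and it can be detected locally: in $2$ rounds every vertex $v$ learns the neighborhoods of its neighbors and can check whether $N[v]$ induces a $K_{\Delta+1}$. For every clique $K$ found this way, one additional round lets the $\Delta+1$ vertices of $K$ gather the lists $L(u)$ for all $u\in K$. Exploiting the unbounded local computation allowed by the \Lo\ model, each vertex then decides $L$-list-colorability of $K$ (a finite combinatorial problem involving at most $(\Delta+1)\Delta$ colors), and either outputs its color in a consistently chosen valid coloring of $K$ (say, the lexicographically smallest one) or reports that no $L$-list-coloring of $G$ exists.

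After this $O(1)$-round preprocessing stage, we remove the $K_{\Delta+1}$ components and apply Theorem~\ref{thm:mad} to the remaining subgraph $G'$, which still has maximum degree at most $\Delta$, satisfies $\mad(G')\le\Delta$, and, by construction, contains no $(\Delta+1)$-clique. The theorem must then return a $\Delta$-list-coloring of $G'$, and does so in $O(\Delta^2\log^3 n)$ rounds, giving the claimed round complexity. Correctness is immediate: $G$ is $L$-list-colorable if and only if each of its $K_{\Delta+1}$ components is, and the two phases of the algorithm never interact because the cliques are isolated. There is really no obstacle in the argument: the whole proof is a direct black-box reduction to Theorem~\ref{thm:mad}, with the structural fact that $K_{\Delta+1}$ subgraphs of $\Delta$-bounded graphs are automatically decoupled components doing all the work.
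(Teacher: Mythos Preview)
Your proof is correct and follows exactly the approach the paper intends: Corollary~\ref{cor:brooks} is stated without an explicit proof and is presented simply as a direct consequence of Theorem~\ref{thm:mad} applied with $d=\Delta$, using the ``moreover'' clause for the $O(\Delta^2\log^3 n)$ bound. Your handling of the $(\Delta+1)$-clique case---observing that any $K_{\Delta+1}$ in a graph of maximum degree $\Delta$ is an isolated component and can therefore be tested and colored locally in $O(1)$ rounds---is the natural (and correct) way to fill in the one detail the paper leaves implicit.
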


We note that Panconesi and Srinivasan~\cite{PS95} 
also gave a randomized variant of their algorithm working in
$O(\tfrac{\log^3 n}{\log \Delta})$ rounds. In our case, it is not clear
whether we can similarly avoid the multiplicative factor polynomial in $\Delta$ in a
randomized version of our algorithm. 

\medskip

In Section~\ref{sec:conclusion}, we will explain how to obtain an
efficient algorithmic version of a variant of Theorem~\ref{thm:Gallai} (see
Theorem~\ref{thm:degchoos}) which also implies
Corollary~\ref{cor:brooks} but is more flexible (in the sense that it
allows vertices to have lists of different sizes).

\medskip

We now turn to consequences of our main result for sparse graphs (whose maximum average degree is independent of $n$). The \emph{girth} of a graph $G$ is the length of a shortest cycle in $G$. A simple consequence of Euler's formula is the following.

\begin{prop}\label{prop:mad}
Every $n$-vertex planar graph of girth at least $g$ has maximum average degree less than $\tfrac{2g}{g-2}$. In particular, planar graphs have maximum average degree less than 6, triangle-free planar graphs have maximum average degree less than 4, and planar graphs of girth at least 6 have maximum average degree less than 3.
\end{prop}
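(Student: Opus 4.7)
The plan is to apply Euler's formula to a well-chosen planar subgraph. Let $H$ be a subgraph of $G$ realizing $\mad(G)$; it suffices to bound the average degree of $H$. Since any subgraph of a planar graph is planar, and any subgraph of a graph of girth at least $g$ has girth at least $g$ (or is acyclic), $H$ is planar with girth at least $g$.

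First I would reduce to the case where $H$ is connected and contains at least one cycle. If $H$ has no cycle then it is a forest on $n_H$ vertices with at most $n_H-1$ edges, so its average degree is less than $2 \leqslant \tfrac{2g}{g-2}$. If $H$ is disconnected, the average degree of $H$ is a convex combination of the average degrees of its components, so it is enough to bound each component. Assuming then that $H$ is connected with $n_H$ vertices, $m_H$ edges, and $f_H$ faces in some planar embedding, Euler's formula gives
\[
n_H - m_H + f_H = 2.
\]
Since every face is bounded by a closed walk of length at least $g$ (using girth $\geqslant g$ together with the presence of a cycle) and each edge is counted at most twice in the sum of face-lengths, we obtain $2 m_H \geqslant g f_H$. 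Substituting $f_H = 2 - n_H + m_H$ and rearranging yields
\[
m_H \;\leqslant\; \tfrac{g(n_H-2)}{g-2},
\]
so the average degree $2 m_H / n_H$ is strictly less than $\tfrac{2g}{g-2}$. Since this bound holds for every subgraph $H$, we conclude $\mad(G) < \tfrac{2g}{g-2}$.

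The three specific cases then follow by plugging in $g=3$ (giving $\mad(G) < 6$ for planar graphs), $g=4$ (giving $\mad(G)<4$ for triangle-free planar graphs), and $g=6$ (giving $\mad(G)<3$ for planar graphs of girth at least $6$).

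There is no real obstacle here: the entire argument is a textbook application of Euler's formula plus a face-length count. The only mild subtlety is making sure that the face-length inequality $2m_H \geqslant g f_H$ is used in a setting where every face really does have boundary length at least $g$ (which is why the reduction to a connected subgraph containing a cycle is needed), and handling the acyclic and disconnected cases separately as above.
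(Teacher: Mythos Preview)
Your proof is correct and is precisely the standard Euler's formula argument the paper has in mind: the paper does not actually spell out a proof of this proposition, but merely introduces it as ``a simple consequence of Euler's formula,'' which is exactly the route you take. There is nothing to add.
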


As a direct consequence of our main result, we obtain:

\begin{cor}\label{cor:planar}
There is a deterministic distributed algorithm of round complexity $O(\log^3
n)$ that given an $n$-vertex planar graph $G$, 
\begin{enumerate}
    \item finds a 6-(list-)coloring of $G$;
    \item finds a 4-(list-)coloring of $G$ if $G$ is triangle-free;
    \item finds a 3-(list-)coloring of $G$ if $G$ has girth at least 6. 
\end{enumerate}
\end{cor}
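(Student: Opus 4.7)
The plan is to obtain each of the three statements as a direct instantiation of Theorem~\ref{thm:mad}, fed with the appropriate maximum-average-degree bound from Proposition~\ref{prop:mad}. In each case I would take $d$ to be the target number of colors (namely $6$, $4$, and $3$ respectively); it then suffices to check three things: (i) $d\ge\max(3,\mad(G))$, so that the theorem applies; (ii) $G$ contains no $(d+1)$-clique, which forces the algorithm to return a $d$-list-coloring rather than a clique; and (iii) $d$ is an absolute constant bounded by~$6$, so the round complexity $O(d^4\log^3 n)$ collapses to $O(\log^3 n)$.

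For statement~(1) we have $\mad(G)<6$ by Proposition~\ref{prop:mad}, so $d=6$ satisfies (i); as every planar graph is $K_5$-minor-free and in particular contains no $K_7$, condition (ii) holds trivially. For statement~(2), triangle-freeness combined with Proposition~\ref{prop:mad} gives $\mad(G)<4$, so $d=4$ is admissible, and the absence of triangles precludes any $K_5$. For statement~(3), Proposition~\ref{prop:mad} gives $\mad(G)<3$, so we may take $d=3$; girth at least~$6$ forbids any triangle, hence a fortiori any $K_4$. In all three cases condition (iii) is immediate and the round complexity bound follows.

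The main (and indeed only) subtlety is handling the clique alternative in the conclusion of Theorem~\ref{thm:mad}, but in each setting the structural restriction coming from planarity (or planarity together with a girth condition) is strictly stronger than just excluding $K_{d+1}$, so no genuine difficulty arises. I do not anticipate any other step requiring nontrivial work.
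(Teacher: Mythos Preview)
Your proposal is correct and matches the paper's approach exactly: the paper presents Corollary~\ref{cor:planar} as an immediate consequence of Theorem~\ref{thm:mad} via the maximum-average-degree bounds of Proposition~\ref{prop:mad}, and your three verifications (the $\mad$ bound, the absence of a $(d+1)$-clique, and the constancy of $d$) are precisely what is needed.
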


Let us discuss the tightness of our results. We first consider item 2 of Corollary \ref{cor:planar}, which turns out to be sharp in several ways. First, it is known that there exist triangle-free planar graphs that are not 3-list-colorable~\cite{V95}, so our 4-list-coloring algorithm is the best we can hope for (from an existential point of view). However, a classical theorem of Gr\"{o}tzsch~\cite{G59} states that triangle-free planar graphs are 3-colorable, so a natural question is whether a distributed algorithm can find such a 3-coloring efficiently. 

We now show that 3-coloring triangle-free planar graphs requires $\Omega(n)$ rounds, and even for rectangular grids (which are 2-colorable) finding such a 3-coloring requires $\Omega(\sqrt{n})$ rounds, which is significantly worst
than our polylogarithmic complexity.
The proof relies on the following observation, due to
Linial~\cite{L92}, who first applied it to show that regular trees
cannot be colored with a constant number of colors by a distributed
algorithm using $o(\log n)$ rounds.

\begin{obs}[\cite{L92}]\label{obs:linial}
Let $G$ be a graph, and $H$ be a graph with at most $|V(G)|$ vertices, such that each ball of
radius at most $r+1$ in $H$ is isomorphic to some ball of radius at most $r+1$ in
$G$. Then no distributed algorithm can color $G$ with less than $\chi(H)$ colors in at most $r$ rounds.
\end{obs}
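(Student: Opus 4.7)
The plan is to argue the contrapositive: if there is a deterministic distributed algorithm $A$ that colors $G$ in $r$ rounds using $k<\chi(H)$ colors, I will exhibit a proper $k$-coloring of $H$ and derive a contradiction. The engine of the proof is the standard locality principle of the \Lo\ model: the color that $A$ assigns to a vertex $v$ is determined by the labeled ball of radius $r$ around $v$ (subgraph together with identifiers), so two vertices whose $r$-balls are isomorphic as identified subgraphs must receive the same color.

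First I would fix an injective identifier assignment $\ell_H\colon V(H)\to\{1,\dots,|V(G)|\}$, which is possible since $|V(H)|\le |V(G)|$. For each $w\in V(H)$, pick an isomorphism $\phi_w\colon B_H(w,r+1)\to B_G(u_w,r+1)$ guaranteed by the hypothesis, push the identifiers $\ell_H$ onto $B_G(u_w,r+1)$ through $\phi_w^{-1}$, and extend these identifiers injectively (but arbitrarily) to the rest of $G$; call the resulting relabeled graph $G_w$. Define $c_H(w)$ to be the color that $A$ outputs at $u_w$ when run on $G_w$. By locality, $c_H(w)$ depends only on the labeled ball $(B_H(w,r),\ell_H|_{B_H(w,r)})$, and is therefore independent of the choice of $\phi_w$ and of the extension used to build $G_w$.

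The main step is to verify that $c_H$ is proper. For an edge $ww'$ of $H$, note that $B_H(w',r)\subseteq B_H(w,r+1)$ since $d_H(w,w')=1$, so $\phi_w$ simultaneously maps the edge $ww'$ to an edge $u_w u'$ of $G$ (with $u'=\phi_w(w')$) and maps the labeled ball $(B_H(w',r),\ell_H|_{B_H(w',r)})$ isomorphically onto the labeled $r$-ball around $u'$ inside $G_w$. Since $A$ colors $G_w$ properly, its outputs at the adjacent vertices $u_w$ and $u'$ differ; by the independence noted above, these outputs are exactly $c_H(w)$ and $c_H(w')$, so $c_H$ is a proper $k$-coloring of $H$, contradicting $k<\chi(H)$. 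The only delicate point in the argument is the bookkeeping that ensures $c_H$ is well-defined and that the proper-coloring property transfers through $\phi_w$, and this is precisely where the radius-$(r+1)$ hypothesis is essential: it guarantees that a single labeled copy inside $G$ contains the $r$-views of both endpoints of any edge of $H$ at once, whereas a mere radius-$r$ assumption would not let us compare $c_H(w)$ and $c_H(w')$ through a common run of $A$.
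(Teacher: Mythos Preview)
The paper does not supply a proof of this observation; it is stated with attribution to Linial and used as a black box. So there is nothing to compare against directly.

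Your argument is correct and is essentially the standard locality proof. Two small points worth making explicit. First, you are tacitly using the center-preserving reading of ``$B_H(w,r+1)$ is isomorphic to some ball $B_G(u_w,r+1)$'', namely that the isomorphism $\phi_w$ sends $w$ to $u_w$; this is the intended meaning in the \Lo\ setting, and it is what makes $B_G(u_w,r)=\phi_w\bigl(B_H(w,r)\bigr)$ hold (and likewise $B_G(u',r)=\phi_w\bigl(B_H(w',r)\bigr)$ for $u'=\phi_w(w')$, via the triangle inequality through $u_w$). Second, your claim that the algorithm $A$ properly colors $G_w$ uses the standard convention that a distributed algorithm must be correct for \emph{every} legal assignment of identifiers from $\{1,\dots,n\}$; this is exactly where the hypothesis $|V(H)|\le |V(G)|$ is spent, since it lets you choose $\ell_H$ with range inside $\{1,\dots,|V(G)|\}$ and then complete it to a bijection on $V(G)$. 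With these two points noted, the well-definedness of $c_H$ and the propriety check across an edge $ww'$ go through exactly as you wrote, and the radius $r+1$ (rather than $r$) is used precisely where you indicate, to fit both $r$-views into a single run of $A$ on the common instance $G_w$.
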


\begin{figure}[htbp]
\begin{center}
\includegraphics[width=5cm]{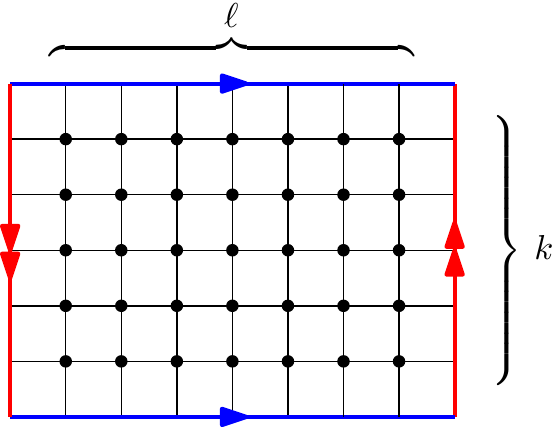} \hspace{2cm}
\includegraphics[width=4cm]{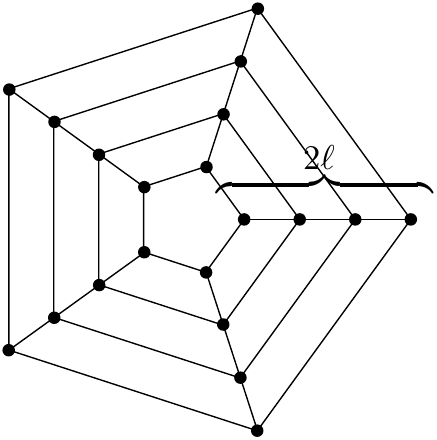}
\caption{Left: A $k$ by $\ell$ grid on the Klein bottle (the graph
  itself can be obtained by identifying the two blue segments (in the
  same direction), and
  the two red segments (in opposite directions)). 
Right: The planar triangle-free graph $H_{2\ell}$ (drawn here for $2\ell=4$).}
\label{fig:grid}
\end{center}
\end{figure}

Let $G_{k,\ell}$ denote the $k$ by $\ell$ rectangular grid on the
Klein bottle (where $k$ denotes the length of vertical cycles and
$\ell$ denotes the length of horizontal cycles, see
Figure~\ref{fig:grid}, left). It was proved by Gallai~\cite{Gal63}
(see also~\cite{KM95,AHNNO01}) that for any $k$ and $\ell$, the graph
$G_{2k+1,2\ell+1}$ is 4-chromatic. Observe that in $G_{2k+1,2k+1}$,
any ball of radius less than $k$ is isomorphic to a ball of the planar
rectangular grid of size $2k+1$ by $2k+1$, and in $G_{5,2\ell+1}$, any ball of radius less than
$\ell$ is isomorphic to a ball of a planar triangle-free graph (more
precisely, a ball in the graph $H_{2\ell}$ depicted in
Figure~\ref{fig:grid}, right). Using Observation~\ref{obs:linial}, this implies the following.

\begin{thm}\label{th:ltfree}
No distributed algorithm can 3-color the graph $H_k$ in less than
$k/2$ rounds. In particular, no distributed algorithm can 3-color
every planar triangle-free graph on $n$ vertices in $o(n)$ rounds. 
\end{thm}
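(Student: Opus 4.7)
The plan is a direct application of Observation~\ref{obs:linial}, which was set up precisely for indistinguishability lower bounds of this kind. The idea is to compare the planar triangle-free graph $H_k$ (whose $3$-coloring complexity we want to lower-bound) against the Klein bottle grid $G_{5,k+1}$, which by Gallai's theorem (quoted above) is $4$-chromatic when $k$ is even, and whose small balls were just observed to be locally indistinguishable from balls in $H_k$.

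Concretely, writing $k=2\ell$ (the odd case follows by monotonicity, since a lower bound for $H_{k-1}$ transfers to $H_k$), I would apply Observation~\ref{obs:linial} with $G:=H_{2\ell}$ and $H:=G_{5,2\ell+1}$. The local-isomorphism hypothesis, specialized to radius $r+1$, follows from the fact stated just before the theorem that every ball of radius less than $\ell$ in $G_{5,2\ell+1}$ is isomorphic to a ball in $H_{2\ell}$, provided $r+1\le \ell-1$, i.e.\ $r\le \ell-2$. The size hypothesis $|V(G_{5,2\ell+1})|\le |V(H_{2\ell})|$ is a routine check from the definitions of the two graphs. Since $\chi(G_{5,2\ell+1})=4$ by Gallai, the observation then implies that no distributed algorithm can $3$-color $H_{2\ell}$ in at most $\ell-2$ rounds, which is the claimed lower bound (up to a harmless additive constant).

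For the ``in particular'' statement, note that $H_k$ is planar and triangle-free by construction (as depicted in Figure~\ref{fig:grid}, right) and has $\Theta(k)$ vertices, so with $n:=|V(H_k)|$ a lower bound linear in $k$ is the same as a lower bound linear in $n$, ruling out any $3$-coloring algorithm running in $o(n)$ rounds.

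I do not anticipate a real obstacle: the two nontrivial ingredients, namely the $4$-chromaticity of $G_{5,2\ell+1}$ (Gallai) and the local embedding of balls of $G_{5,2\ell+1}$ into $H_{2\ell}$, are already established or cited in the paragraph preceding the theorem. The remaining work is to invoke Observation~\ref{obs:linial} and verify the size count, both of which should be straightforward. The only subtle point is to track the exact range of radii for which the local isomorphism holds, so as to get the constant in front of $k$ right; this is a routine off-by-one check.
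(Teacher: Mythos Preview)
Your proposal is correct and follows exactly the paper's approach: the paper's proof consists precisely of the sentence ``Using Observation~\ref{obs:linial}, this implies the following'' after having set up the $4$-chromaticity of $G_{5,2\ell+1}$ (via Gallai) and the local isomorphism of its small balls with balls of $H_{2\ell}$. Your write-up simply unpacks that sentence, including the off-by-one bookkeeping and the size check, and is entirely in line with what the authors intended.
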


\begin{thm}\label{th:lgrid}
No distributed algorithm can 3-color the rectangular $k\times k$-grid
in the plane  in less than $k/2$ rounds. In particular, no distributed
algorithm can 3-color every planar bipartite graph on $n$ vertices in $o(\sqrt{n})$ rounds. 
\end{thm}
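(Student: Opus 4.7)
The plan is to apply Observation~\ref{obs:linial} with $G$ equal to the planar $k\times k$ grid and $H=G_{K,K}$ the Klein-bottle grid, where $K$ is the largest odd integer with $K\le k$. Since $K$ is odd, Gallai's theorem cited in the excerpt gives $\chi(H)\ge 4$, and $|V(H)|=K^2\le k^2=|V(G)|$ satisfies the vertex-count hypothesis of the observation, leaving only the local ball condition to check.

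The geometric heart of the proof is to verify that for every $s\le (K-1)/2$, each ball of radius $s$ in $H$ is isomorphic to some ball of radius $s$ in $G$. The idea is that every non-contractible cycle in $H$ has length at least $K$, so whenever $2s+1\le K$ a ball of radius $s$ in $H$ cannot wrap around either direction of the Klein bottle, and is therefore isomorphic to the ball of radius $s$ around a vertex in the infinite planar square lattice. Such a ball in turn embeds into $G$ as the ball around any vertex at graph-distance at least $s$ from the boundary of $G$, which exists because $2s+1\le k$.

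Plugging $r+1=\lfloor (K-1)/2\rfloor$ into Observation~\ref{obs:linial} then yields the claimed lower bound for $3$-coloring $G$ (matching $k/2$ up to standard rounding). The second sentence of the theorem follows immediately: setting $n=k^2$, the planar $k\times k$ grid is bipartite, so any distributed algorithm that $3$-colors every planar bipartite $n$-vertex graph must in particular $3$-color the $\sqrt n\times \sqrt n$ grid and therefore requires $\Omega(\sqrt n)$ rounds. The step I expect to require the most care is the ``no wrap-around'' verification at the start of the middle paragraph: one must check that no closed walk of length less than $K$ in $H$ is non-contractible, a standard but slightly finicky check given the two kinds of side-identifications (straight in one direction, flipped in the other) shown in Figure~\ref{fig:grid}.
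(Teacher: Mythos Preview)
Your proposal is correct and follows essentially the same argument as the paper: the paper also compares the planar $(2k+1)\times(2k+1)$ grid with the Klein-bottle grid $G_{2k+1,2k+1}$, uses Gallai's result that the latter is $4$-chromatic, observes that balls of radius less than $k$ in the Klein-bottle grid are isomorphic to balls in the planar grid, and applies Observation~\ref{obs:linial}. Your handling of the parity of $k$ via the auxiliary odd $K\le k$ and your explicit check of the vertex-count hypothesis are minor bookkeeping refinements of the same idea.
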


It is an interesting problem to find an algorithm of round complexity
matching this lower bound in the
case of planar bipartite graphs.

\begin{qn}
Is there a distributed algorithm that  can 3-color every $n$-vertex
planar bipartite graph in $O(\sqrt n)$ rounds?
\end{qn}

\begin{figure}[htbp]
\begin{center}
\includegraphics[width=10cm]{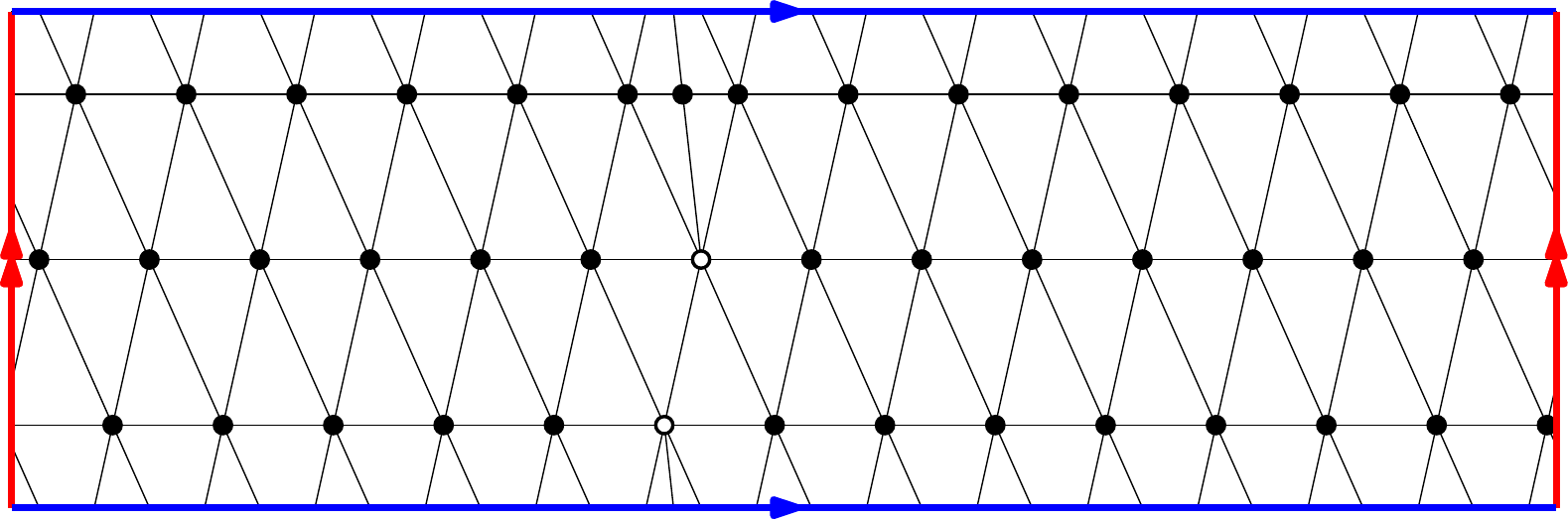} 
\caption{A toroidal triangulation that is not 4-colorable. The graph
  itself is obtained by identifying the two blue segments, and
  the two red segments.}
\label{fig:fisk}
\end{center}
\end{figure}

Consider now item (1) in Corollary \ref{cor:planar}. It is known
that planar graphs are 5-list-colorable~\cite{T94}, so item (1) is not best
possible (from an existential point of view). On the other hand,
Voigt~\cite{V93} proved that there exist planar graphs that are not
4-list-colorable. Fisk~\cite{Fis78} proved that triangulations of 
surfaces in which all vertices have even degree except two adjacent
vertices, are not 4-colorable. Such triangulations exist in the
torus, see Figure~\ref{fig:fisk}, where the two vertices of odd degree
are depicted in white. Versions of this graph on $n$ vertices clearly exist for
any $n\equiv 1 \pmod 3$, and in such a graph any ball of radius at
most $\tfrac{n-1}6-3$ induces a planar graph. Using
Observation~\ref{obs:linial} again, this implies that no distributed
algorithm can 4-color every planar graph on $n$ vertices in $o(n)$
rounds, which proves Theorem~\ref{th:lplan}. This raises the following
natural question.

\begin{qn}
Is it true that there is a distributed algorithm 5-(list-)coloring planar graphs in a polylogarithmic number of rounds?
\end{qn}

It might be the case that the girth condition in item (3) of
Corollary~\ref{cor:planar} is not best possible, so the following might very well have a positive answer.

\begin{qn}
Is it true that there is a distributed algorithm 3-(list-)coloring planar graphs of girth at least 5 in a polylogarithmic number of rounds? 
\end{qn}

By the result of Linial~\cite{L92} (mentioned above), stating that
regular trees cannot be colored with a constant number of colors by a
distributed algorithm within $o(\log n)$ rounds, there is no hope to
obtain a better round complexity for the two questions above.

\medskip

A possible way to show, as in Theorems~\ref{th:ltfree}
and~\ref{th:lgrid}, that planar graphs cannot be efficiently
5-(list-)colored would be to find a graph $G$ embedded on some
surface, in which each ball of sufficiently large radius (say
$n^\epsilon$, for some arbitrary small $\epsilon>0$) is planar, and
such that $G$ is not 5-(list-)colorable. However, such a graph does not exist, as we now explain.

\smallskip

Given a graph $G$ embedded in some surface, the \emph{edge-width} of $G$ is the length of a shortest non-contractible cycle in $G$. The reader is referred to~\cite{MT} for some
background on graphs on surfaces. Note that if $G$ has edge-width at least $2r+2$, then each ball of radius at most $r$ is planar (the converse is not true, as seen for example by considering a 3 by $(2r+2)$ grid on the torus, in which the edge-width is equal to 3, but any $r$-ball is planar). It was proved by Thomassen~\cite{T93} that any graph embedded in some surface of genus $g$ with edge-width at least $2^{O(g)}$ is 5-colorable. DeVos, Kawarabayashi and Mohar~\cite{DKM08} later  proved that embedded graphs of sufficiently large edge-width are 5-list-colorable. These results were qualitatively improved recently by Postle and Thomas~\cite{PT16}, who showed the following.

\begin{thm}[\cite{PT16}]
If $G$ is embedded in a surface of genus $g$, with edge-width $\Omega(\log g)$, then $G$ is 5-list-colorable. Moreover, if $G$ has girth at least 5, then $G$ is 3-list-colorable.
\end{thm}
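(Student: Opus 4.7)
The plan is to prove this by induction on the genus $g$, using Thomassen's technique of reducing list-coloring on surfaces to precoloring-extension problems in the plane. The base case $g=0$ is Thomassen's celebrated theorem that every planar graph is $5$-list-colorable, together with its girth-$5$ counterpart showing every planar graph of girth at least $5$ is $3$-list-colorable. Both planar results are proved by strengthening the statement to allow a short precolored path on the outer face and some outer-face vertices with shortened lists, so that the induction on $|V(G)|$ can always remove a carefully chosen vertex on the outer face.

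For the inductive step, suppose $G$ is embedded in a surface $\Sigma$ of genus $g\ge 1$ with edge-width $w\ge c\log g$ for a constant $c$ to be determined. First I would locate a shortest non-contractible cycle $C$, which has length exactly $w=\Theta(\log g)$. Cutting $\Sigma$ along $C$ yields either (i) a surface $\Sigma'$ of strictly smaller Euler genus with one or two boundary components (if $C$ is non-separating or handle-reducing), or (ii) two surfaces whose genera sum to less than $g$ (if $C$ is separating). In all cases, the cut graph $G'$ can be viewed as embedded in a surface of lower complexity, with $C$ duplicated along the new boundary components.

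The crux is then to show that a suitable $L$-coloring of $C$ (which exists because $C$ is a short cycle whose vertices all have lists of size at least $5$, respectively $3$) extends to an $L$-coloring of the remainder of $G$. This requires an enhanced inductive hypothesis in the spirit of Thomassen: for any embedded graph of lower genus with a precolored set $S$ whose components have bounded size, provided the edge-width around $S$ is still large enough, the precoloring extends. The extension is obtained by applying Thomassen's planar extension theorem locally in a disk around each boundary component of the cut surface and iterating the cutting procedure until only planar pieces remain.

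The main obstacle is the parameter bookkeeping: each cut potentially reduces the edge-width by an additive term of order $\log g$, and a naive recursion would need $\Theta(g)$ cuts to reach the planar case, which would force $w=\Omega(g)$. To obtain the stronger $w=\Omega(\log g)$ bound, one has to invoke the \emph{hyperbolic-family} framework of Postle and Thomas: the family of obstructions to $5$-list-colorability (resp.\ $3$-list-colorability at girth $5$) satisfies an isoperimetric-type inequality of the form $|V(H)|\le \alpha\cdot (\text{boundary length of }H)$ for some absolute $\alpha$. This inequality forces the genus to drop geometrically under the cutting process, so that only $O(\log g)$ cuts are needed, closing the recursion at exactly the $\Omega(\log g)$ edge-width threshold. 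Establishing this hyperbolicity property for the relevant families of critical graphs is where the technical heart of the argument lies.
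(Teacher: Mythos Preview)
The paper does not prove this theorem. It is stated as a result of Postle and Thomas~\cite{PT16} and is only quoted in Section~\ref{sec:cons} as part of the discussion of why lower-bound constructions in the style of Theorems~\ref{th:ltfree} and~\ref{th:lgrid} cannot rule out efficient distributed $5$-list-coloring of planar graphs. There is therefore nothing in the paper to compare your proposal against.

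That said, your sketch is a fair high-level summary of the Postle--Thomas strategy: the base case is Thomassen's planar $5$-choosability (and its girth-$5$ analogue), one cuts along short non-contractible cycles, and the $\Omega(\log g)$ threshold arises precisely because the relevant families of critical graphs are \emph{hyperbolic} in their sense, which forces the recursion depth to be logarithmic rather than linear in $g$. Two caveats are worth noting. First, what you call ``the technical heart of the argument'' --- establishing hyperbolicity for the families of $5$-list-critical (resp.\ $3$-list-critical at girth~$5$) graphs --- is indeed the entire substance of~\cite{PT16}, and your proposal treats it as a black box rather than supplying any ingredient of its proof. Second, the cutting argument you describe does not quite work as stated: one does not literally perform $O(\log g)$ successive cuts, each shaving $\Theta(\log g)$ from the edge-width; rather, the hyperbolicity of the family gives a direct bound on the size of any critical embedded graph in terms of the genus, from which the edge-width conclusion follows. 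So your proposal should be read as an informed outline of where the result comes from, not as a proof.
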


In their statement, the condition on the edge-width can
be replaced by the weaker condition that every ball of radius $O(\log
n)$ is planar\footnote{Luke Postle, private communication.}. Note that any graph on $n$ vertices has genus at most $O(n^2)$, so it
follows that arguments similar to those of Theorem~\ref{th:ltfree}
and~\ref{th:lgrid} cannot prove that planar graphs on $n$ vertices
cannot be 5-(list-)colored in $O(\log n)$ rounds. And similarly, the
same techniques cannot prove that $n$-vertex graphs of girth at least
5 cannot be $3$-(list\-)colored in in $O(\log n)$ rounds.

\medskip

We conclude this section with a consequence of Theorem~\ref{thm:mad} for graphs
embeddable on a fixed surface (other than the sphere). A classic result of Heawood states that
any graph of Euler genus $g>0$ has maximum average degree at most
$\tfrac{1}{2}(5+\sqrt{24g+1})$, and thus choice number at most
$H(g)=\lfloor\tfrac{1}{2}(7+\sqrt{24g+1})\rfloor$ (see
also~\cite{BMT99} and the references therein). Using this bound, we obtain the following direct corollary of
Theorem~\ref{thm:mad}.

\begin{cor}\label{cor:surf}
For any integer $g\ge 1$, there is a deterministic distributed algorithm of round complexity $O(\log^3
n)$ that given an $n$-vertex graph $G$ embeddable on a surface of Euler
genus $g$, finds an $H(g)$-list-coloring of $G$. Moreover, when
$\tfrac{1}{2}(5+\sqrt{24g+1})$ is an integer and $G$ is not the complete
graph on $H(g)$ vertices, the algorithm can indeed find an $(H(g)-1)$-list-coloring of $G$.
\end{cor}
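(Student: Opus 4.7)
The plan is a direct invocation of Theorem~\ref{thm:mad}. Write $m(g) := \tfrac{1}{2}(5+\sqrt{24g+1})$, so that $H(g) = \lfloor m(g) \rfloor + 1$. The two inputs needed are already on the page: the Heawood bound, stated just above the corollary, gives $\mad(G) \le m(g)$ for any $G$ embeddable on a surface of Euler genus $g \ge 1$; and for $g \ge 1$ one has $m(g) \ge m(1) = 5$.

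For the first statement, I would apply Theorem~\ref{thm:mad} with $d := H(g)$. The hypotheses are satisfied, since $d \ge 6 \ge 3$ and $d = \lfloor m(g) \rfloor + 1 \ge m(g) \ge \mad(G)$. The theorem then returns either a copy of $K_{d+1} = K_{H(g)+1}$ in $G$, or an $H(g)$-list-coloring. The clique outcome is ruled out by the Ringel--Youngs theorem: the chromatic number of any surface of Euler genus $g \ge 1$ is at most $H(g)$, so $K_{H(g)+1}$ does not embed on that surface and in particular cannot appear as a subgraph of $G$. Hence the algorithm returns the list-coloring, and since $d$ depends only on $g$, the round complexity $O(d^4 \log^3 n)$ becomes $O(\log^3 n)$.

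For the second statement, I would assume $m(g)$ is an integer, so that $H(g) - 1 = m(g)$, and apply Theorem~\ref{thm:mad} with $d := H(g) - 1$. The hypotheses are again met ($d \ge 5 \ge 3$ and $d = m(g) \ge \mad(G)$), and the theorem outputs either a $K_{H(g)}$ subgraph in $G$ or an $(H(g)-1)$-list-coloring. The excluded case ``$G$ is not the complete graph on $H(g)$ vertices,'' read as what Theorem~\ref{thm:mad} can actually test for---namely, the absence of a $K_{H(g)}$ subgraph---forces the algorithm into the coloring branch. There is no real obstacle here: the argument is a one-line consequence of Theorem~\ref{thm:mad}, with the Heawood bound and the Ringel--Youngs theorem doing all the nontrivial work behind the scenes; the only mildly delicate point is matching the stated exception with the clique-subgraph test produced by Theorem~\ref{thm:mad}.
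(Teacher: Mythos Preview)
Your approach is exactly the paper's: the corollary is stated as a direct consequence of Theorem~\ref{thm:mad} together with the Heawood bound on $\mad(G)$, and that is precisely what you do.

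Two minor remarks. For the first part, Ringel--Youngs is not needed: the Heawood inequality $\mad(G)\le m(g)<\lfloor m(g)\rfloor+1=H(g)$ already rules out a $K_{H(g)+1}$ subgraph, since such a subgraph would force $\mad(G)\ge H(g)$. For the second part, your caveat is well placed and in fact points to a genuine imprecision in the statement: Theorem~\ref{thm:mad} only detects a $K_{H(g)}$ \emph{subgraph}, and on the torus (where $m(2)=6$ is an integer and $H(2)=7$) one can embed $K_7$ together with an extra vertex of degree~$3$ placed inside a face, giving a graph that is not $K_7$ yet contains $K_7$ and is therefore not $6$-colorable. So the hypothesis should read ``$G$ does not contain $K_{H(g)}$,'' which is also the hypothesis in Dirac's map-color theorem and its list-coloring version in~\cite{BMT99}; with that reading your argument is complete.
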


If we are merely interested in the existence of such a coloring, the assumption that
$\tfrac{1}{2}(5+\sqrt{24g+1})$ is an integer is not necessary in
second part of Corollary~\ref{cor:surf}, as proved by B\"ohme,
Mohar and Stiebitz~\cite{BMT99}. From an algorithmic point of view however, it is not clear
whether this assumption can be omitted.

\section{Overview of the proof of Theorem~\ref{thm:mad}}\label{sec:overview}

We now recall the setting of Theorem~\ref{thm:mad}. The graph $G$ has
$n$ vertices, and maximum average degree at most $d$, for some integer
$d\ge 3$. Moreover, $G$ does not contain any clique on $d+1$ vertices
(otherwise such a clique can be found in two rounds, and we are
done). Any vertex $v$ has a list $L(v)$ of $d$ colors, and our goal is
to efficiently find an $L$-list-coloring.

\medskip

A first remark is that we can assume without loss of generality
that $d\le n$, since otherwise we can simply set $d:=n$ in the
theorem. This will be assumed implicitly throughout the proof (in fact
it will only be needed towards the end of the proof of Proposition~\ref{prop:S1}).

\medskip

The proof of Theorem~\ref{thm:mad} goes as follows: in a
polylogarithmic number of rounds, we identify some set $A$ of vertices of $G$, representing a constant
fraction of the vertex set, and such that any list-coloring of $
G-A$ can be extended to $A$ in a polylogarithmic number of rounds. By
repeatedly removing such a set $A$ (this can be done at most $O(\log
n)$ times), we obtain a trivial graph (that can easily be colored) and then
proceed to extend this coloring to each of the sets $A$
that were removed, one by one (starting from the last one to the first one).

\medskip

So the proof naturally breaks into two very different parts. In the
first one, we find such a set $A$ and show that it has size linear in $n$. We
use purely graph theoretic arguments but we feel that some of our tools
could be useful to design other distributed algorithms. In the second part, we
show how to extend any coloring of $G-A$ to $G$ in a polylogarithmic
number of rounds. The analysis uses a combination of Theorem~\ref{thm:Gallai}
and classic tools from distributed computing.

\medskip

Let us now be more specific about the set $A$ mentioned above. Let
$c = \tfrac{12}{\log(6/5)}$ (this specific value will only be needed in
the proof of Proposition~\ref{prop:S1}, so anywhere else in the proof the
reader can simply assume that $c$ is any fixed constant). Given a vertex $v$ and
an integer $r$, the \emph{ball centered in $v$ of radius $r$}, denoted
by $B^r(v)$, is the set of vertices at distance at most $r$ from
$v$. We will often consider balls of radius $r= \lceil c \log n \rceil$, in this case
we will omit the superscript in $B^{ \lceil c \log n  \rceil}(v)$ and write $B(v)$
instead. Note that from now on we will omit floors and ceiling when they are not
necessary in our discussions (we will write $c \log n$ instead of $\lceil c \log n \rceil$
and consider it as an
integer). We will also be interested in balls within specific
subgraphs of $G$. Given a subset $R$ of vertices of $G$, $B_R^r(v)$ is
the subset of vertices of $G$ that are at distance at most $r$ from
$v$ in $G[R]$ (the subgraph of $G$ induced by $R$). Note that the
ball $B_R^r(v)$ is empty if and only if $v \not\in R$. Again, for
convenience, we write $B_R(v)$ instead of $B_R^{c \log n}(v)$.

\medskip

Any vertex of degree at most $d$ in $G$ is said to be \emph{rich}, and the
remaining vertices are said to be \emph{poor}. Note that there are at
most $\tfrac{d}{d+1}\,n$ poor vertices, and thus the set $R$
of rich vertices has size at least $\tfrac{n}{d+1}$ (a formal proof will be 
provided at the end of the proof of Lemma~\ref{lem:happy}). Our goal is to select a
large set of vertices that can be easily (and efficiently) colored,
given a partial coloring of
the rest of the graph. Vertices $v$ of degree at most $d-1$ certainly
have this property, since their lists contain at least one more color
than their degree (and then the coloring of $G-v$ can be extended). 
However, it might be the case that $G$ contains few
such vertices, or even no such vertex at all (if $G$ is $d$-regular),
and thus we also have to look for candidates in the set of vertices of
degree precisely $d$. Indeed, we will not consider poor vertices (whose degree
is at least $d+1$) as possible candidates. The ball $B_R(v)$
(as defined in the preceding paragraph)
of a rich vertex $v$ is called \emph{the rich ball of $v$}. A rich
vertex is said to be \emph{happy} if its rich ball $B_R(v)$ contains a vertex of
degree at most $d-1$ in $G$ or is not a Gallai tree. Recall that this
second option is equivalent to the fact that some block (2-connected
component) of the subgraph of $G$
induced by $B_R(v)$ is neither an odd cycle nor a clique. We denote by $A$ the set of happy
vertices. In Section~\ref{sec:happy}, we prove the following result.

\begin{lem}\label{lem:happy}
$|A|\ge \tfrac{n}{(3d)^3}$. Moreover, if there are no poor vertices in
$G$, then $|A|\ge \tfrac{n}{12d+1}$.
\end{lem}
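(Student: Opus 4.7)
My plan has four steps.

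First, I establish $|R| \ge n/(d+1)$ by a standard degree-counting argument: from $\mad(G) \le d$ we get $2|E(G)| \le dn$, and each of the $n-|R|$ poor vertices contributes at least $d+1$ to the degree sum, so $(d+1)(n-|R|) \le dn$. Next, I observe that any vertex $v$ with $\deg_G(v) \le d-1$ is rich and witnesses its own happiness, since $v$ belongs to its own rich ball $B_R(v)$ and has $G$-degree at most $d-1$; writing $V_{<d}$ for this set, we have $V_{<d} \subseteq A$. If $|V_{<d}|$ already meets the claimed bound we are done, so I may assume nearly every rich vertex has $G$-degree exactly $d$.

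The key structural observation is the following. For a sad vertex $v$, the induced subgraph $T := G[B_R(v)]$ is a Gallai tree in which every vertex has $G$-degree exactly $d$ (each is rich and not in $V_{<d}$). By the footnote argument --- the only $d$-regular Gallai tree with $d \ge 3$ is $K_{d+1}$, excluded by hypothesis --- $T$ cannot be $d$-regular as an induced subgraph, so some $u \in V(T)$ has $\deg_T(u) < d = \deg_G(u)$. Such a $u$ must have a $G$-neighbor outside $B_R(v)$, and this neighbor is either a poor vertex or a rich vertex at distance exactly $c\log n + 1$ from $v$ in $G[R]$. In particular, in the no-poor case every sad ball must exhibit a ``rich escape'' on its outer boundary.

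I would then set up a charging map $\phi : S \to A$ from sad vertices to nearby happy witnesses, guided by the location of the structural defect produced above. The fibre sizes $|\phi^{-1}(a)|$ control the ratio of sad to happy vertices. In the no-poor case I expect to show the multiplicity is at most $12d$, giving $n = |R| = |A| + |S| \le (12d+1)|A|$ and hence $|A| \ge n/(12d+1)$. In the general case, poor vertices provide an alternative mechanism for the $T$-degree defect, which inflates the multiplicity by an extra factor of $O(d^2)$ and yields $|A| \ge |R|/O(d^2) \ge n/O(d^3)$, matching $n/(3d)^3$ with the right constants.

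The main obstacle is the fibre bound on $\phi$. The block-cut structure of a Gallai-tree rich ball can be quite intricate, and sad vertices may cluster along shared boundary segments. Establishing the sharp multiplicity bounds --- especially the $12d$ constant in the no-poor case --- requires a delicate block-by-block charge analysis; poor vertices in the general case further complicate matters, since they can shield a sad ball from the $K_{d+1}$ obstruction and thereby allow more sad vertices to share a single happy witness.
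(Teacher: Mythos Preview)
Your first two steps are fine and align with the paper. The genuine gap is in steps~3--4: your structural observation does not produce a happy witness, so the charging map $\phi:S\to A$ cannot be defined. You correctly argue that for each sad $v$ the Gallai tree $T=G[B_R(v)]$ is not $d$-regular, hence some $u\in T$ has a $G$-neighbor $w\notin B_R(v)$. But in the no-poor case $w$ is merely a rich vertex at distance $c\log n+1$ from $v$ in $G[R]$, and nothing prevents $w$ (or $u$) from being sad as well: the ball $B_R(w)$ may perfectly well be a Gallai tree of degree-$d$ vertices too. The escape from one sad ball can land inside $S$, and iterating gives no control on when (or whether, within any bounded distance) you reach $A$. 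There is thus no ``nearby happy witness'' to charge $v$ to; the difficulty you flag as the main obstacle (bounding the fibres of $\phi$) is secondary --- the primary obstacle is that $\phi$ has no target.

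The paper takes a qualitatively different route. Instead of examining sad balls one at a time, it analyses the induced subgraph $G[S]$ globally and proves (Proposition~\ref{prop:S1}) that at least $|S|/12$ of its vertices have degree at most $d-1$ \emph{in $G[S]$}. Since every vertex of $S$ has $G$-degree exactly $d$, each such vertex contributes an edge from $S$ to $A\cup P$; the bound on $|A|$ then follows from the easy inequalities $|E(S,A)|\le d|A|$ and $|E(S,P)|\le d(d+1)|A|$ (the latter via $\sum_v(d_G(v)-d)\le 0$), together with $|R|\ge n/(d+1)$. The $|S|/12$ bound is the real content and is not a local or charging argument: the paper replaces each local clique-block of $G[S]$ by a star, suppresses the resulting degree-$2$ vertices, shows the transformed graph has girth $\Omega(\log n)$, and then invokes the Moore bound (Alon--Hoory--Linial) to force average degree below $\tfrac{11}{4}$, yielding at least $|S|/12$ vertices of degree at most $2$. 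Your per-ball observation --- one escape edge per sad ball, with no guarantee that it leaves $S$ --- cannot substitute for this global density statement.
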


Our goal is then to prove that any coloring of $G-A$ can be efficiently
extended to $A$.

\begin{lem}\label{lem:ext}
Any $L$-list-coloring of $G-A$ can be extended to an $L$-list-coloring
of $G$ in $O(d \log^2 n)$ rounds.
\end{lem}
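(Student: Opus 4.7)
The plan is to reduce the extension problem to an $L'$-list-coloring of $G[A]$, establish colorability componentwise using Theorem~\ref{thm:Gallai} together with the $K_{d+1}$-freeness of $G$, and implement the resulting combinatorial argument efficiently via a standard network decomposition.

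Given the coloring $c$ of $G-A$, for each $v \in A$ I set $L'(v) = L(v) \setminus \{c(u) : u \in N_G(v) \setminus A\}$. Using $|L(v)|=d$, $|N_G(v)\setminus A|=d_G(v)-d_{G[A]}(v)$, and the fact that every $v\in A$ is rich, one obtains
\[
|L'(v)| \;\ge\; d + d_{G[A]}(v) - d_G(v) \;\ge\; d_{G[A]}(v),
\]
with strict inequality exactly when $d_G(v)<d$, so the task reduces to producing an $L'$-list-coloring of $G[A]$ in $O(d\log^2 n)$ rounds. The combinatorial core is the following claim: every connected component $C$ of $G[A]$ is $L'$-list-colorable. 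If $G[C]$ is not a Gallai tree this is immediate from Theorem~\ref{thm:Gallai}; otherwise, a standard strengthening (reverse-BFS from a vertex with strict list surplus) works provided some $v_0\in C$ satisfies $|L'(v_0)|>d_{G[C]}(v_0)$. If no such $v_0$ existed, every vertex of $C$ would have $d_G(v)=d$ and no neighbor in $G-A$, making $C$ a connected component of $G$ that is simultaneously $d$-regular and a Gallai tree; as argued in the proof of Theorem~\ref{thm:folk}, this forces $C=K_{d+1}$, contradicting the standing hypothesis.

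To turn this argument into a distributed algorithm, I plan to compute a Panconesi--Srinivasan network decomposition~\cite{PS96} of $G[A]$ in $O(d\log^2 n)$ rounds, splitting $A$ into $k=O(\log n)$ color classes whose ``clusters'' have diameter $O(\log n)$ in $G[A]$. The color classes are processed sequentially: for each cluster $X$ of the current class, a leader collects in $O(\log n)$ rounds the local structure—namely $G[X]$, the rich balls $B_R(v)$ of the vertices $v\in X$, and the colors already fixed on $N_G(X)$—locally computes a valid completion of $X$ with respect to the further-reduced lists $L''$, and broadcasts it back in $O(\log n)$ additional rounds. Summing over the $k=O(\log n)$ classes gives the advertised complexity.

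The hard part will be justifying that the local coloring step always succeeds: a cluster $X$ is generally not a full component of $G[A]$, so the clean $K_{d+1}$ contradiction above does not apply at the cluster level. If $G[X]$ is not a Gallai tree, Theorem~\ref{thm:Gallai} is invoked directly on the instance $(G[X], L'')$; if it is, one must exhibit a vertex $v_0 \in X$ with $|L''(v_0)|>d_{G[X]}(v_0)$, and this is precisely where the definition of $A$ pays off. Each $v \in X$ is happy, so its rich ball $B_R(v)$—of radius $c \log n$, comfortably larger than the cluster diameter—contains either a $G$-vertex of degree $\le d-1$ or a non-Gallai block. Since this structural witness is visible to the leader, it can be incorporated into the locally solved instance, supplying either the strict slack needed to apply the strengthened Gallai-tree coloring result or a non-Gallai block that destroys the Gallai-tree structure of the effective instance and brings us back to the component-level argument. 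Putting the ingredients together yields the $O(d\log^2 n)$-round algorithm claimed by Lemma~\ref{lem:ext}.
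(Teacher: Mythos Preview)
Your proposal has two gaps. First, the complexity claim is wrong: \cite{PS96} gives a $(2^{O(\sqrt{\log n})},2^{O(\sqrt{\log n})})$-network decomposition in $2^{O(\sqrt{\log n})}$ rounds, not an $(O(\log n),O(\log n))$-decomposition in polylogarithmic time (the paper itself remarks that plugging in~\cite{PS96} only yields a $2^{O(\sqrt{\log n})}$ bound for Theorem~\ref{thm:mad}). Second, and more structurally, the ``incorporation'' of the rich-ball witness does not go through as stated. The witness guaranteed by happiness lives in $B_R(v)\subseteq R=A\cup S$ and may lie outside both the cluster $X$ and the set $A$; to exploit it you must uncolor vertices of $B_R(v)\setminus X$. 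But your network decomposition only guarantees that same-class clusters are non-adjacent \emph{in $G[A]$}, so the enlarged regions $X\cup\bigcup_{v\in X}B_R(v)$ of two same-class clusters can overlap or be adjacent in $G[R]$, and across classes there is no separation at all --- recoloring one region can invalidate another's constraints. Concretely, the case you cannot dismiss is a cluster $X$ that induces a Gallai tree, in which every vertex has $G$-degree exactly $d$, and all of whose $A$-neighbors lie in earlier (already colored) classes: such an $X$ need not be a full component of $G[A]$, so the $K_{d+1}$ contradiction does not apply, and the only witness may sit in $S$ or in some other cluster.

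The paper sidesteps both issues by using a $(2c\log n,\,2c\log^2 n)$-ruling forest in $G[R]$ with respect to $A$ (from~\cite{AGLP89}) instead of a network decomposition. Every non-root vertex of the forest has an uncolored parent, so a bottom-up greedy pass --- refined within each depth level by a $(d{+}1)$-coloring of $H$ --- colors all non-roots in $O(d\log^2 n)$ rounds with no appeal to happiness at all. Happiness is invoked only at the roots $r_i$, and the ruling-forest distance guarantee makes the balls $B_R(r_i)$ pairwise disjoint and non-adjacent in $G[R]$, so each can be uncolored and recolored independently via Theorem~\ref{thm:Gallai}.
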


Lemma~\ref{lem:ext} will be proved in Section~\ref{sec:ext}. In the
remainder of this section, we show how to deduce Theorem~\ref{thm:mad}
from Lemmas~\ref{lem:happy} and~\ref{lem:ext}.

\bigskip

\noindent \emph{Proof of Theorem~\ref{thm:mad}.}
Since being happy only
depends on the ball of radius $c\log n$ around each vertex, the set $A$ of happy vertices
can be found in $O(\log n)$ rounds.
We repeatedly remove sets $A_1,A_2,\ldots,A_k$ from $G$ using
Lemma~\ref{lem:happy} until $G$ is empty. By Lemma~\ref{lem:happy},
$k\le \tfrac{\log n}{\log (1/(1-1/27d^3))}=O(d^3\log n)$, and thus this part of the
procedure takes $O(d^3\log^2 n)$ rounds. If each vertex of $G$ has
degree at most $d$, then there are no poor vertices and it follows similarly
from Lemma~\ref{lem:happy} that  $k=O(d\log n)$ and this  part of the
procedure takes $O(d\log^2 n)$ rounds.

We then extend the list-coloring of the empty graph to
$A_k,A_{k-1},\ldots ,A_1$ using Lemma~\ref{lem:ext}. Each extension
takes $O(d \log^2 n)$ rounds, so this part of the procedure runs in
$O(kd \log^2 n)=O(d^4 \log^3 n)$ rounds. If each vertex of $G$ has
degree at most $d$, this part of the procedure runs in
$O(kd \log^2 n)=O(d^2 \log^3 n)$ rounds. In the end, we obtain an
$L$-list-coloring of $G$ in $O(d^4 \log^3 n)$
rounds (or $O(d^2 \log^3 n)$
rounds if $G$ has maximum degree at most $d$), as desired. This concludes the proof of
Theorem~\ref{thm:mad}.\hfill $\Box$

\bigskip

Observe that the proof above basically says that the round complexity
in Theorem~\ref{thm:mad} is at most $O(\log n)$ times $O(d \log^2 n)$ (the complexity of
Lemma~\ref{lem:ext}) times $O(d^3)$ or  $O(d)$ (the denominators in the bounds of
Lemma~\ref{lem:happy}). So any improvement in either of these two lemmas would
yield an immediate improvement in Theorem~\ref{thm:mad}.

\section{$A$ has linear size -- Proof of Lemma~\ref{lem:happy}}\label{sec:happy}

The goal of this section is to prove that $A$, the set of happy
vertices, has size linear in $n$.

\medskip

Recall that a vertex is happy if it is rich (i.e. it has degree at
most $d$) and its rich ball $B_R(v)=B_R^{c\log n}(v)$ contains a vertex of degree at
most $d-1$ or is not a Gallai tree. Let $S=R\setminus A$ (here `S' stands for `Sad'). Note that
$S$ is the set of rich vertices whose rich ball contains only
vertices of degree $d$ (in $G$) and induces a Gallai tree. We will
prove that $S$ is
not too large compared to $A$ (Proposition~\ref{prop:S1} below), and since $R=A\cup S$ has size linear
in $n$, this will prove Lemma~\ref{lem:happy}.


\medskip

Recall that the \emph{girth} of a graph $G$ is the length of a shortest cycle in
$G$. Note that if $G$ has girth at least $g$, then any ball
$B^{\tfrac{g-1}2}(v)$ induces a tree (and thus a Gallai tree).
We will need a consequence of the following result of Alon, Hoory and
Linial~\cite{AHL02}.

\begin{thm}[\cite{AHL02}]\label{thm:moore}
If a graph $G$ has girth at least $g$ ($g$ odd), and average degree
$d=2+\delta$, for some real number $\delta>0$, then $$n\ge 1+
d\sum_{i=0}^{\tfrac{g-1}2}(d-1)^i\ge (1+\delta)^{\tfrac{g-1}2}.$$
\end{thm}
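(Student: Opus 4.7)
The plan is to establish a Moore-type density bound by carefully examining breadth-first search balls of radius $r := (g-1)/2$. The first inequality is the heart of the theorem; the second then follows by a short numerical estimate.

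First I would fix a vertex $v \in V(G)$ and consider the BFS ball $B^r(v)$. Because the girth of $G$ is at least $2r+1 = g$, two distinct paths of length at most $r$ starting at $v$ cannot end at a common vertex: their concatenation would be a closed walk of length at most $2r < g$ containing a cycle shorter than $g$, contradicting the girth hypothesis. It follows that $B^r(v)$ is in bijection with the truncated BFS tree $T_v$ of depth $r$, so $|B^r(v)|$ equals $1$ plus the number of non-backtracking walks of length between $1$ and $r$ starting at $v$. Level by level, this count factorizes into products of terms $d_G(u)-1$ along such walks.

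The next step, and the main obstacle, is to convert these per-vertex expressions into a bound depending only on the \emph{average} degree $d$. Summing $|B^r(v)|$ over all $v \in V(G)$, I would reorganize the total into a global count of non-backtracking walks of length at most $r$ in $G$. A convexity argument (Jensen's inequality applied walk by walk) combined with $\sum_u d_G(u) = dn$ then yields
\[
\sum_{v \in V(G)} |B^r(v)| \;\ge\; n \cdot \Bigl(1 + d\sum_{i=0}^{r}(d-1)^i\Bigr).
\]
Since $|B^r(v)| \le n$ trivially, dividing by $n$ gives the first inequality. The delicate point is that the ball size is a nonlinear symmetric function of the degree sequence, so applying Jensen cleanly requires averaging in the right way — typically by working with the universal cover of $G$ and weighting walks appropriately, which is the innovation of Alon--Hoory--Linial over the classical Moore bound for minimum degree.

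Finally, the second inequality follows by keeping only the $i=r$ term in the geometric sum:
\[
1 + d\sum_{i=0}^{r}(d-1)^i \;\ge\; d(d-1)^r \;=\; (2+\delta)(1+\delta)^r \;\ge\; (1+\delta)^r,
\]
so $n \ge (1+\delta)^{(g-1)/2}$, as required.
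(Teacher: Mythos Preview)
The paper does not prove Theorem~\ref{thm:moore}; it is quoted verbatim as a result of Alon, Hoory and Linial~\cite{AHL02} and is used only as a black box to derive Corollary~\ref{cor:moore}. So there is no ``paper's own proof'' to compare your proposal against.

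As for the content of your sketch: the overall shape is that of the Alon--Hoory--Linial argument, and your derivation of the second inequality is fine. The part you flag as delicate --- passing from per-vertex BFS counts to a bound in the \emph{average} degree via ``Jensen applied walk by walk'' --- is indeed the entire point of~\cite{AHL02}, and your description of it is too vague to count as a proof. The actual argument does not simply average $|B^r(v)|$ over $v$; it works with non-backtracking walks weighted by a carefully chosen probability distribution (essentially the one induced by the universal cover with edge weights $\sqrt{(d_G(u)-1)(d_G(v)-1)}$), and the inequality one needs is an AM--GM/entropy-type bound rather than a direct Jensen step on the ball sizes. If you want to include a proof rather than a citation, you would need to reproduce that mechanism explicitly; as written, your ``convexity argument'' is a placeholder for the nontrivial part of the theorem.
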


We will only use the following direct corollary of Theorem~\ref{thm:moore}.

\begin{cor}\label{cor:moore}
If an $n$-vertex graph $G$ has girth at least $g$, and average degree at least
$2+\delta$, for some real number $\delta>0$, then $$g\le
\tfrac4{\log(1+\delta)}\,\log n.$$
\end{cor}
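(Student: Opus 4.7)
The plan is to derive the corollary almost immediately from Theorem~\ref{thm:moore}, the only subtlety being that the theorem requires $g$ odd, whereas the statement of the corollary imposes no parity constraint on the girth. So first I would reduce to the odd case.

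If $g$ is odd, I apply Theorem~\ref{thm:moore} directly: from $n \ge (1+\delta)^{(g-1)/2}$ and taking logarithms I obtain $\frac{g-1}{2}\log(1+\delta) \le \log n$, and rearranging gives
\[
g \;\le\; 1 + \frac{2\log n}{\log(1+\delta)}.
\]
If $g$ is even, I use the trivial observation that a graph of girth at least $g$ also has girth at least $g-1$, which is odd. Applying the odd case to $g-1$ yields
\[
g \;\le\; 2 + \frac{2\log n}{\log(1+\delta)}.
\]

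The last step is to absorb the additive constants into the multiplicative constant $4$ claimed in the statement. Since the average degree of any simple graph on $n$ vertices is at most $n-1$, the hypothesis $2+\delta \le n-1$ gives $1+\delta \le n-2 < n$, so $\log(1+\delta) < \log n$, i.e.\ $2 \le \frac{2\log n}{\log(1+\delta)}$. Combining this with the bound from the even case yields
\[
g \;\le\; 2 + \frac{2\log n}{\log(1+\delta)} \;\le\; \frac{2\log n}{\log(1+\delta)} + \frac{2\log n}{\log(1+\delta)} \;=\; \frac{4\log n}{\log(1+\delta)},
\]
and the odd case gives an even better bound, so the inequality holds in both cases.

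There is no real obstacle here; the proof is a direct unpacking of Theorem~\ref{thm:moore} together with the parity reduction and the trivial upper bound on the average degree that lets us absorb the $+2$ into the multiplicative constant.
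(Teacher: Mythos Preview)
Your proof is correct and follows essentially the same approach as the paper: reduce to the odd-girth case (applying Theorem~\ref{thm:moore} to $g$ or $g-1$), obtain $g\le 2+\tfrac{2\log n}{\log(1+\delta)}$, and then absorb the additive~$2$ using $1+\delta\le n$. The only difference is cosmetic --- the paper unifies the two parity cases into the single inequality $n\ge (1+\delta)^{g/2-1}$, whereas you treat them separately, and you spell out the justification for $1+\delta<n$ via the average-degree bound that the paper leaves implicit.
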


\begin{proof}
We can assume that $g$ or $g-1$ is odd, and thus by
Theorem~\ref{thm:moore} we have $n\ge
(1+\delta)^{\tfrac{g}2-1}$. It follows that $$g\le
\tfrac{2 \log n}{\log(1+\delta)}+2\le \tfrac4{\log(1+\delta)}\,\log n,$$ since $1+\delta\le n$.
\end{proof}





Note that any induced subgraph of a Gallai tree is also a Gallai
tree. Thus, in $G[S]$, all balls of radius $c\log n$ are also Gallai
trees. A block of a ball of radius $c\log n$ in $G[S]$ is
called \emph{a local block} of $G[S]$. Observe that if a local block $B$
of $G[S]$ is not a block of $G[S]$, then $B$ is contained in a block
of $G[S]$ that contains a cycle of length greater than $2c \log n$.
Another important observation about local blocks of $G[S]$ is as
follows. 

\begin{obs}\label{obs:maxinclusion}
If three vertices $u,v,w$ of a maximal clique $K$ are in a local block
of $G[S]$, then $K$ is a local block of $G[S]$.
\end{obs}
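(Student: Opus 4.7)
The plan is to leverage the Gallai tree block structure already set up above. For a vertex $y$, write $\beta_y := B_S^{c\log n}(y)$ for the ball of radius $c\log n$ around $y$ in $G[S]$; by the paragraph preceding the observation, each such $\beta_y$ is a Gallai tree, so each of its blocks is either an odd cycle or a clique. A triangle cannot be contained in an odd cycle of length at least $5$, so any block of some $\beta_y$ that contains three pairwise adjacent vertices is necessarily a clique. This single fact will drive the whole argument.

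Concretely, I would exhibit $K$ as a block of the ball $\beta_u$ centred at $u$. Since $K$ is a clique of $G[S]$, every vertex of $K$ lies in $S$ and is at $G[S]$-distance at most one from $u$, so $K \subseteq \beta_u$. Let $B'$ be the block of the Gallai tree $\beta_u$ that contains the edge $uv$. The triangle $uvw$ then lies in $B'$, since $w$ is adjacent to both $u$ and $v$ and the block containing a given edge is unique; by the remark above, $B'$ is therefore a clique.

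It then remains to check both inclusions $K = B'$. For $K \subseteq B'$: any $x \in K$ is adjacent to both $u$ and $v$, so the triangle $xuv$ forces $x$ into the unique block of $\beta_u$ containing the edge $uv$, namely $B'$. For $B' \subseteq K$: the set $B'$ is itself a clique of $G[S]$ containing the vertex $u$ of $K$, so the maximality of $K$ as a clique of $G[S]$ yields $B' \subseteq K$. Hence $K$ coincides with the block $B'$ of the ball $\beta_u$, which is by definition a local block of $G[S]$.

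The argument is short and I do not anticipate any real obstacle; the hypothesis that $u,v,w$ share a local block is used only to guarantee that they all lie in $S$ and hence span a triangle in $G[S]$, after which the Gallai tree structure of $\beta_u$ does all the work.
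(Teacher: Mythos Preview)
Your argument is correct and takes a genuinely different (and cleaner) route than the paper. The paper stays inside the original ball $B(x)$ whose block $C$ contains $u,v,w$: it uses the block--cut tree of the Gallai tree $B(x)$ to locate the unique vertex of $C$ closest to $x$, and from this deduces that every vertex of $K$ is within distance $c\log n$ of $x$, hence $K\subseteq B(x)$ and $K=C$. You bypass the distance argument entirely by recentring at $u$: since $K$ is a clique of $G[S]$, it sits at radius $1$ inside $\beta_u$, and the Gallai tree structure of $\beta_u$ then forces $K$ to coincide with the block $B'$ containing $uv$. Your approach makes transparent that the hypothesis on the given local block is essentially irrelevant (any maximal clique of $G[S]$ on at least three vertices is a local block), whereas the paper's argument ties $K$ back to the specific ball $B(x)$.

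One small slip: in the inclusion $B'\subseteq K$, you write that ``$B'$ is a clique containing the vertex $u$ of $K$, so the maximality of $K$ yields $B'\subseteq K$''. That inference is false in general---two maximal cliques can share a vertex. What you actually want (and have already established) is that $B'$ is a clique containing \emph{all of} $K$; maximality then gives $K=B'$ immediately.
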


To see why this holds, observe that in any ball $B(x)$ of $G[S]$ where
$u,v,w$ are in a block $C$ of the ball (by definition the ball $B(x)$ induces a Gallai
tree, and thus this block $C$ is a clique on at least 3 vertices), some vertex of $C$, say $u$, is closer from $x$ than the others
(it is the vertex to which $C$ is attached to the rest of the
Gallai tree induced by $B(x)$). Since all the vertices of $K$ are at
least as close from $x$ as $v$, $K \subseteq B(x)$ and thus $K=C$ itself is a
local block of $G[S]$, which proves
Observation~\ref{obs:maxinclusion}.

\medskip

The bulk of the proof of Lemma~\ref{lem:happy} is contained in the
following technical proposition.

\begin{prop}\label{prop:S1}
There are at least $\tfrac{1}{12}\,
|S|$ vertices of degree at most $d-1$ in
$G[S]$. 
\end{prop}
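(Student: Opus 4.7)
The plan is a proof by contradiction combining the Alon--Hoory--Linial girth bound (Corollary~\ref{cor:moore}) with the Gallai tree structure of the rich balls that define $S$. Put $T := \{v \in S : d_{G[S]}(v) = d\}$; since the proposition is equivalent to $|S \setminus T| \ge |S|/12$, I suppose for contradiction that $|T| > \tfrac{11}{12}|S|$. Since vertices in $S$ are rich (degree at most $d$), we get $\sum_{v \in S} d_{G[S]}(v) \ge d|T| > \tfrac{11d}{12}|S|$, so the average degree of $G[S]$ strictly exceeds $\tfrac{11d}{12} \ge \tfrac{11}{4} > 2 + \tfrac{1}{5}$ whenever $d \ge 3$. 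In particular $\mad(G[S]) > 2 + \tfrac{1}{5}$.

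Applying Corollary~\ref{cor:moore} with $\delta = 1/5$ to any subgraph of $G[S]$ achieving the max average degree yields a cycle $C$ of length at most $\tfrac{4 \log n}{\log(6/5)} = \tfrac{c \log n}{3}$. Pick any $u \in V(C) \subseteq S$. Since $|C|/2 < c \log n$, the cycle fits inside $B_R(u)$, which is a Gallai tree by the definition of $S$. Being 2-connected, $C$ lies inside a single block of $B_R(u)$, which must then be either a clique (necessarily containing a triangle, so $|C|=3$; its size is at most $d$ because $G$ has no $K_{d+1}$) or an odd cycle.

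To turn this structural statement into a contradiction I plan a volume estimate on the $d$-core of $G[S]$. Let $K$ be this core; since every vertex of $G[S]$ has degree at most $d$, if $K$ is nonempty then $K$ is $d$-regular. A quick count -- there are at most $(d-1)|S \setminus T| < (d-1)|S|/12$ edges between $T$ and $S \setminus T$, and the peeling discards only vertices that lose degree on that account -- shows that $K$ is in fact nonempty. For any $u \in K$, the ball $B_K^{c \log n}(u)$ is an induced subgraph of the Gallai tree $B_R(u)$ and is therefore itself a Gallai tree; its interior vertices (those at distance strictly less than $c \log n$ from $u$) have all their $d$ neighbors in $K$ inside the ball, hence degree exactly $d$ there. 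However, a Gallai tree whose interior is $d$-regular must grow: under the no-$K_{d+1}$ hypothesis, the leaf blocks -- cliques of size at most $d$ (whose non-cut vertices have degree at most $d-1$) or odd cycles (whose non-cut vertices have degree $2 < d$) -- always contain vertices of degree $< d$, and such vertices can only appear on the boundary. Using Observation~\ref{obs:maxinclusion} to control cliques and noting that every odd cycle block inside a radius-$c\log n$ ball has length at most $2c\log n$, one obtains a level-wise expansion rate of at least $(6/5)^{1/12}$, so that $|B_K^{c \log n}(u)| \ge (6/5)^{c \log n / 12} = n$; a boundary effect then strictly pushes this past $n$, contradicting $|B_K^{c \log n}(u)| \le n$.

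The main obstacle is making the growth estimate truly rigorous in the presence of long odd-cycle blocks, which expand the ball only linearly per level; the precise constant $c = 12/\log(6/5)$ is chosen so that even in this worst case, with odd cycle blocks capped at $2c\log n$ vertices and Observation~\ref{obs:maxinclusion} preventing cliques from proliferating, the cumulative per-level growth over $c \log n$ levels overflows $n$.
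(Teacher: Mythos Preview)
Your plan has a genuine structural gap, and the first half of it is a dead end that you never come back to.

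\smallskip

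\textbf{The short-cycle step goes nowhere.} Applying Corollary~\ref{cor:moore} directly to $G[S]$ gives you a cycle of length at most $(c/3)\log n$, which you correctly observe must sit inside a single Gallai-tree block and hence is a triangle inside a clique block or an odd-cycle block. But that is entirely consistent with the hypotheses on $S$; it is not a contradiction, and you never use it again. The whole point of the paper's constant $c$ is \emph{not} to force short cycles into Gallai blocks, but to ensure that once those block-cycles have been neutralised, nothing short remains.

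\smallskip

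\textbf{The $d$-core argument is incomplete in two places.} First, the ``quick count'' for non-emptiness of $K$ does not work as stated: knowing there are at most $(d-1)|S\setminus T|$ edges between $T$ and $S\setminus T$ bounds neither the cascade of the peeling nor the degeneracy. Without invoking the Gallai structure, a graph with $>\tfrac{11}{12}$ of its vertices of degree exactly $d$ can still be $(d-1)$-degenerate (think of the square of a path for $d=4$), so the argument you give cannot be completed at that level of generality. Second --- and this is the real gap you yourself flag --- the claimed per-level expansion rate $(6/5)^{1/12}$ is asserted, not proved. For $d=3$, a vertex of degree $3$ lying on an odd-cycle block has two cycle edges and exactly one bridge edge, so traversing that block gives no branching at all; the ball grows only linearly along it. You say the cap of $2c\log n$ on odd-cycle lengths saves you, but that cap comes from the same radius you are trying to overflow, so the argument is circular: to get multiplicative growth you need many block-transitions within $c\log n$ levels, and long odd cycles prevent exactly that.

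\smallskip

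\textbf{What the paper does instead.} The paper does not attempt a direct volume argument in $G[S]$. It constructs an auxiliary graph $H$: each local clique block of size $\ge 3$ is replaced by a star, and the artificial degree-$2$ vertices this creates are suppressed. Two facts make this pay off: (i) every surviving short cycle of $H$ is an odd-cycle local block (edge-disjoint from the others), so deleting one edge per such cycle yields an $H'$ of girth $>\tfrac{2c}{3}\log n$ while losing at most a fifth of the edges; (ii) vertices of degree $\le 2$ in $H$ correspond to vertices of degree $\le d-1$ in $G[S]$. Now Corollary~\ref{cor:moore} is applied to $H'$, not to $G[S]$: it forces the average degree of $H'$ below $\tfrac{11}{5}$, hence that of $H$ below $\tfrac{11}{4}$, and a Markov-type count gives $\ge |S|/12$ vertices of degree $\le 2$. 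The auxiliary graph is precisely what lets the Moore bound bite: it kills the clique-triangles and isolates the odd-cycle blocks so they can be broken one edge at a time. Your plan tries to apply the Moore bound before doing any of this surgery, which is why it stalls.
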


\begin{proof}
Let $H$ be the graph obtained from $G[S]$ by doing the following: 
\begin{itemize}
 \item First, for each local block $C$
isomorphic to a clique on at least 3 vertices, we add a vertex $v_C$ adjacent to
all the vertices of $C$, and then remove all the edges of
$C$. By definition, all vertices $v_C$  have degree at least 3. These
vertices are drawn in white in Figure~\ref{fig:constrh},
center and right.

Note that we might have decreased the degree of some vertices in the
process: in particular there might be some vertices of degree at least
3 in $G[S]$ that have degree precisely 2 in the current construction. 
Let us call $T$ this set of
vertices (these vertices are circled in Figure~\ref{fig:constrh},
center). Note that each vertex of $T$ has to be adjacent to a vertex $v_C$
(for some clique $C$ on at least 3 vertices, and so $v_C$ has degree
at least 3), and thus there is no
path of three consecutive vertices of $T$ in the current construction. Note that since every ball of
radius $c\log n$ in $G[S]$ is a Gallai tree, vertices of $T$ are not
contained in any cycle of size $2c\log n$.

\item  Then we suppress all the vertices of $T$. Here, by
\emph{suppressing} a vertex of degree two, we mean deleting this vertex, and
then adding an edge between its two neighbors if they are not already
adjacent (in other words, we replace a
path on 2 edges by a path on 1 edge). Note that if we suppress two
adjacent vertices of degree two, this is equivalent to replacing a
path on 3 edges by a path on 1 edge. A direct consequence of the final
remark in the paragraph above is that $H$ has girth at least 5 (in
particular the construction does not produce loops or multiple edges).
\end{itemize}

The two steps of
the construction of $H$ are depicted in Figure~\ref{fig:constrh}.

\begin{figure}[htbp]
\begin{center}
\includegraphics[width=16cm]{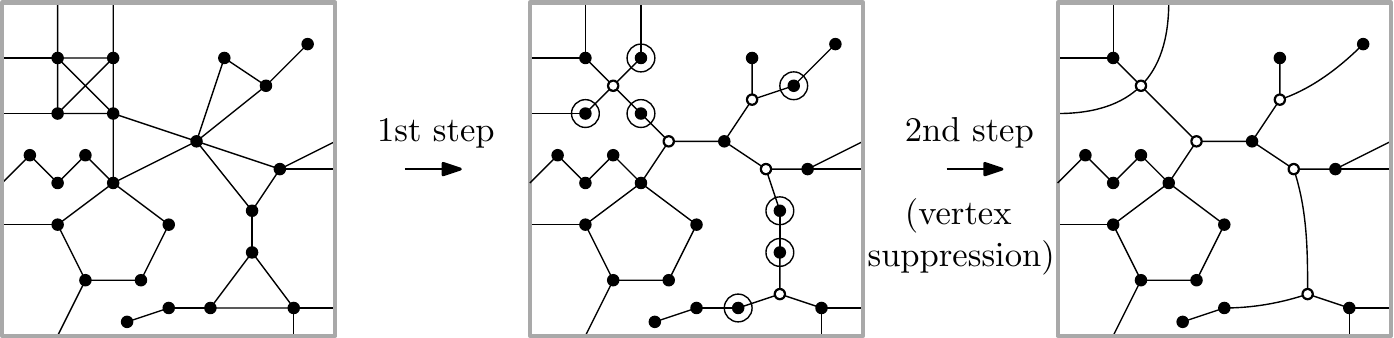}
\caption{The two-step construction of $H$ from $G[S]$ (viewed from a
  ball of radius $c\log n$).}
\label{fig:constrh}
\end{center}
\end{figure}

Observe that if a vertex $v$ of $G[S]$ has degree at least 3 in $G[S]$
and at most 2 in $H$, then it follows from our vertex suppression step that
$v$ has degree at most 1 in $H$. Such a vertex $v$ thus lies in a
unique local block $B$ in $G[S]$, which is a clique on at least 3
vertices. 
Since cliques have size at most $d$ in $G$, such a vertex $v$ has degree
at most $d-1$ in $G[S]$. Since $d\ge 3$, it follows that the number of
vertices of degree at most $d-1$ in
$G[S]$ is at least the number of
vertices of degree at most 2 in
$H$. Hence, to prove the theorem it is enough to show that there are  at least $\tfrac{1}{12}\,
|S|$ vertices of degree at most 2 in
$H$.

\medskip

Before proving this, let us do two convenient observations on the number
  of vertices and the size of cycles in $H$.

\begin{itemize}
\item Since each vertex of $R$ has degree at
most $d$, each vertex of $S$ lies in
at most $d/2$ cliques of at least 3 vertices which are local blocks.
Indeed, (i) an edge cannot be in two distinct maximal cliques by inclusion,
otherwise the ball of radius one centered on one of the vertices of the edge would
not be a Gallai tree, contradicting the fact that it is in $S$. And (ii) local blocks
which are cliques of size at least $3$ are maximal by inclusion by Observation~\ref{obs:maxinclusion}.
Thus $H$ contains at most
$(1+\tfrac{d}6)\,|S|\le \tfrac{d}2\,|S|$ vertices (here we used that
$d\ge 3$). 

\item Consider any cycle $C$ of length $\ell$ in $H$. By construction,
  $C$ is either a local
  block of $G[S]$ (in which case $\ell$ is odd and $5\le \ell \le 2c
  \log n$, since triangles are considered as cliques on 3 vertices and every edge is in at most one local block which is a clique as we already observed in the previous point), or corresponds to a cycle of $G[S]$ of length at least
  $\ell/3$ that is not contained in any ball of radius $c\log n$ of
  $G[S]$. The bound $\ell/3$ follows from the fact that there were no
  paths of 3 vertices of $T$ before the suppression step, and thus we
  have only replaced paths on 2 or 3 edges by single edges. It follows
  that any cycle of $H$ distinct from a local block of $G[S]$ has
  length greater than $\tfrac{2c}3\log n$.
\end{itemize}

\medskip

Let $H'$ be the graph obtained from $H$ by removing precisely one edge
in each cycle of length between 5 and
$\tfrac{2c}3\log n$. Note that the observation above ensures that such a
cycle is necessarily a local block of $G[S]$, and is thus an induced odd cycle.
Such cycles are pairwise edge-disjoint otherwise any vertex incident
to an edge lying in the intersection of two such cycles
would be happy. It follows from the construction that $H'$
has girth greater than $\tfrac{2c}3\log n$.  Since each cycle of $H$ contains at
least 5 edges, we have removed at most
a fifth of the edges of $H$, and thus the average degree of $H$ is at
most $\tfrac54$ times the average degree of $H'$.

Assume for the sake of contradiction that $H'$ has average degree at least $\tfrac{11}{5}$.
Applying
Corollary~\ref{cor:moore} to the graph $H'$ with $g>\tfrac{2c}3\log n$ and
$\delta=\tfrac15$, we obtain $\tfrac{2c}3 \log n < \tfrac4{\log(6/5)}\,\log
|V(H)|\le \tfrac4{\log(6/5)}\,\log
\tfrac{d|S|}2\le \tfrac8{\log(6/5)}\,\log
n$, using that $d\le n$ and $|S|\le n$. This implies that $c < \tfrac{12}{\log(6/5)}$, which contradicts our definition
of $c$.
We can thus assume that $H'$ has average degree at most
$\tfrac{11}{5}$, and thus $H$ has average degree at most $\tfrac54
\cdot \tfrac{11}{5}=\tfrac{11}{4}$. It follows that $H$ has at
least $|S|/12$ vertices of degree at most two, and thus $G[S]$ has at
least $|S|/12$ vertices of degree at most $d-1$, which concludes the
proof of Proposition~\ref{prop:S1}.
\end{proof}

We are now ready to prove Lemma~\ref{lem:happy}.

\bigskip

\noindent \emph{Proof of Lemma~\ref{lem:happy}}. 
By Proposition~\ref{prop:S1}, there are at least $\tfrac{1}{12}\,
|S|$ vertices of degree at most $d-1$ in
$G[S]$. Since each vertex of $S$ has degree $d$ in $G$, this implies
that there are also at least
$\tfrac{1}{12}\,
|S|$ edges leaving $S$. Let us divide these edges into two
sets: $E(S,P)$, the set of edges with one end in $S$ and the other in
$P$, the set of poor vertices, and $E(S,A)$, the set of edges with one end in $S$ and the other
in $A$. Observe that since each vertex of $A$ has degree at most $d$,
we have
$|A|\ge \tfrac1{d}|E(S,A)|$.

\medskip

Since $G$ has average degree at most $d$, we have $\sum_{v \in G}
(d_G(v)-d)\le 0$. Recall that $P$ (the set of poor vertices) is
precisely the set
of vertices of degree at least $d+1$ in $G$, while $A$ (the set of
happy vertices) contains all the vertices of degree at
most $d-1$ in $G$, and thus $$d|A|\ge \sum_{v\in A}(d-d_G(v))\ge
\sum_{v\in P}(d_G(v)-d)\ge \max(|P|,|E(S,P)|-d|P|).$$ It follows that
$|E(S,P)|\le d|P|+d|A|\le d(d+1)|A|$, and thus $|A|\ge
\tfrac1{d(d+1)}|E(S,P)|$.

Summing up the inequalities we obtained on $|E(S,A)|$ and $|E(S,P)|$,
we obtain $$|A|(1+\tfrac1{d+1})\ge |A|+\tfrac1{d+1}|A|\ge 
\tfrac1{d(d+1)}(|E(S,P)|+|E(S,A)|)\ge \tfrac{|S|/12}{d(d+1)},$$
and thus $|A|\ge \tfrac{|S|}{12d(d+2)}$.

 Since $G$ has average degree at most $d$, the set $R=A\cup S$
contains at least $\tfrac{n}{d+1}$ vertices and thus $$|A|\ge
\tfrac{n}{(d+1)(12d(d+2)+1)}\ge \tfrac{n}{(3d)^3},$$ where we
used that $d\ge 3$ in the last inequality. This concludes the proof of
the first part of Lemma~\ref{lem:happy}.

\medskip

Assume now that $G$ has maximum degree at most $d$. Then $P$ is empty, and so is
$E(S,P)$. It follows that $|A|\ge \tfrac1{d}|E(S,A)|\ge \tfrac{1}{12d}\,
|S|$. Since $P$ is empty, $V(G)=R=A\cup S$ and thus $|A|\ge
\tfrac{n}{12d+1}$, as desired. This concludes the proof of Lemma~\ref{lem:happy}.
\hfill $\Box$


\section{The coloring can be extended efficiently -- Proof of Lemma~\ref{lem:ext}}\label{sec:ext}

The goal of this section is to prove that any $L$-list-coloring of $G-A$ can be \emph{efficiently}
extended to $A$. (Actually our recoloring process might modify the colors of some vertices of $G\setminus A$).
To show this, we will need the notion
of an $(\alpha,\beta)$-ruling forest, introduced by Awerbuch et
al. in~\cite{AGLP89}. Given a graph $H$ and a subset $U$ of vertices
of $H$, an \emph{$(\alpha,\beta)$-ruling forest with respect to $U$}
is a family of vertex-disjoint rooted trees $(T_i,r_i)_{1\le i\le
  t}$, such that 
\begin{enumerate}
\item each vertex of $U$ lies in some tree $T_i$, and
\item for each $1\le i<j\le t$, the roots $r_i$ and $r_j$ are at
  distance at least $\alpha$ in $H$, and
\item each rooted tree $T_i$ has depth at most $\beta$ (i.e. each
  vertex of $T_i$ is at distance at most $\beta$ from $r_i$ in $T_i$).
\end{enumerate}

Awerbuch et
al. in~\cite{AGLP89} proved that a $(k,k\log n)$-ruling forest can
be computed deterministically in the \Lo\ model in a graph on at most
$n$ vertices in
$O(k \log n)$ rounds. Note that ruling forests were also used by Panconesi and
Srinivasan~\cite{PS95} in their ``distributed'' proof of Brooks
theorem, but our application here is slightly different, essentially due to the
fact that we have to consider a list-coloring problem rather than a
coloring problem (and it is seems that their approach of switching
colors cannot be applied in the list-coloring framework).

\medskip

Consider a graph $G'$, in which every vertex $v$ starts with a list
$L'(v)$ of size $d$, and assume that a subset $U$ of vertices is
precolored (i.e. each vertex of $u$ is assigned a color from its
list). Let $H$ be the subgraph of $G$ induced by
the uncolored vertices (i.e. the vertices outside $U$), and for each
$v\in H$, let $L_H(v)$
be the list obtained from $L'(v)$ by removing the colors of the
precolored neighbors of $v$. Recall that $d_H(v)$ denotes the degree
of a vertex $v$ in $H$. The following simple observation will be
crucial in our proof Lemma~\ref{lem:ext}.

\begin{obs}\label{obs:recol}
For any vertex $v\in H$, $|L_H(v)|\ge d-d_{G'}(v) +d_H(v)$. In particular, if $d_{G'}(v)\le d$ then $|L_H(v)|\ge d_H(v)$ and if $d_{G'}(v)\le d-1$ then $|L_H(v)|\ge d_H(v)+1$.
\end{obs}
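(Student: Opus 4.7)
The plan is to prove this via a direct counting argument on the colors removed from $L'(v)$ when forming $L_H(v)$. By definition, $L_H(v)$ is obtained from $L'(v)$ by deleting the colors assigned to those neighbors of $v$ in $G'$ which lie in the precolored set $U$. The neighbors of $v$ in $G'$ partition into neighbors in $U$ and neighbors in $V(G')\setminus U = V(H)$, and the latter set has cardinality exactly $d_H(v)$. Hence the number of precolored neighbors of $v$ is $d_{G'}(v)-d_H(v)$, and so at most this many colors can be removed from $L'(v)$ in passing to $L_H(v)$.

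Combining this with the assumption $|L'(v)|\ge d$, I obtain
$$|L_H(v)|\ge |L'(v)|-(d_{G'}(v)-d_H(v))\ge d-d_{G'}(v)+d_H(v),$$
which is the first inequality in the statement. The two specializations are then immediate substitutions: if $d_{G'}(v)\le d$ then $d-d_{G'}(v)\ge 0$, giving $|L_H(v)|\ge d_H(v)$; and if $d_{G'}(v)\le d-1$ then $d-d_{G'}(v)\ge 1$, giving $|L_H(v)|\ge d_H(v)+1$. There is essentially no obstacle to this observation since the claim is purely bookkeeping; the only mild subtlety is correctly recognizing that the number of precolored neighbors of $v$ equals $d_{G'}(v)-d_H(v)$ (and is bounded above by it in the worst case where all such neighbors receive distinct colors from $L'(v)$), rather than some larger quantity such as $d_{G'}(v)$.
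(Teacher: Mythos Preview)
Your argument is correct and is exactly the intended justification; the paper states this as an observation without proof, and your counting of the precolored neighbors as $d_{G'}(v)-d_H(v)$ together with $|L'(v)|\ge d$ is precisely the implicit reasoning.
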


\medskip

We can now proceed with the proof of Lemma~\ref{lem:ext}.

\bigskip

\noindent \emph{Proof of Lemma~\ref{lem:ext}}. 
Recall that the vertex-set $G$ is divided into two sets: $R$, the rich
vertices (that have degree at most $d$), and $P$, the poor vertices (that have degree at least $d+1$) and the set $R$ itself is divided
into $A$ (the happy vertices), and $S$ (the set of rich vertices whose
rich ball of radius $c\log n$ induces a Gallai tree and only contains
vertices of degree $d$ in $G$).

\smallskip

Consider some $L$-list-coloring of $G-A$, which we wish to extend to
$A$. 

\smallskip

Let $(T_i,r_i)_{1\le i\le
  t}$ be a $(k,k\log n)$-ruling forest in $G[R]$ with respect to $A$,
with $k=2c\log n$. This ruling
forest can be computed in $O(\log ^2 n)$ rounds as proved in~\cite{AGLP89}. Let us
denote by $T$ the union of the vertices contained in some tree
$T_i$. Note that $T$ contains $A$ (the set of vertices that are
uncolored at this point), and also possibly some (colored) vertices of $S$. We
uncolor all the vertices of $T\cap S$, and now $T$ is precisely the
set of uncolored vertices. 

Let $H$ be the subgraph of $G$
induced by $T$. For each vertex $v\in T$, we start by removing from $L(v)$ the
colors of the neighbors of $v$ outside $T$. Let us denote by $L_H(v)$ the
new list of each vertex $v$ of $H$ after this removal step. By construction, in order to extend the
coloring of $G-A$ to $A$ it is enough to
(efficiently) find an $L_H$-list-coloring of $H$.

\medskip

Since each vertex $v\in T$ has
degree at most $d$ in $G$, and starts with a list $L(v)$ of size $d$,
it follows from Observation~\ref{obs:recol} that
$|L_H(v)|\ge d_H(v)$. 
To find an $L_H$-list-coloring of $H$, we first
compute a partition of $H$ into $d+1$ stable sets
$C_1,C_2,\ldots,C_{d+1}$. Since each vertex of
$T$ has degree at most $d$ in $G$ and thus in $H$, such a partition
(which is exactly a proper $(d+1)$-coloring)
can
be computed deterministically in $O(d\log n)$
rounds~\cite{GPS88}. Recall that each tree $T_i$ has depth at most $k\log
n=2c\log^2 n$. We
then proceed to $L_H$-list-color $H$ as follows: for each $i$ from $2c\log^2 n$ to 1, and for each $j$ from 1 to $d+1$, we consider the
vertices of $C_j$ that are at distance precisely $i$ from the root of
their respective tree of the ruling forest (these vertices form a
stable set of $H$ and can thus be colored independently), and each of
these vertices selects a color of its list $L_H$ that does not appear
on any of its colored neighbors in $H$. Since each such vertex $v$ has at
least one
uncolored neighbor in $H$ (its parent in the ruling forest, since we proceed
from the leaves to the root), it has at most $d_H(v)-1\le |L_H(v)|-1$
colored neighbors and can thus select a suitable color from its list.

\medskip

This procedure takes $O(d \log^2 n)$ steps, and all that remains to do
is to find suitable colors for the roots $r_i$, $1\le i \le t$. By the
definition of a ruling forest, each root $r_i$ lies in $A$, and thus the ball $B_R(r_i)$
(of radius $c\log n$) contains a vertex of degree at most $d-1$ or is not a
Gallai tree. Moreover, it also follows from the defintion of our
ruling forest that any two balls $B_R(r_i)$  and $B_R(r_j)$ are
disjoint and have no edges between them. We then uncolor each of the
balls $B_R(r_i)$ completely, and remove from the lists of the vertices
inside these balls the colors assigned to their neighbors outside the
balls. As before, it follows from Observation~\ref{obs:recol} that the list of remaining
colors of each vertex $v$ is as least as large as the number of
uncolored neighbors of $v$. Moreover, if $v$ has degree at most $d-1$
in $G$, the list of remaining
colors for $v$ is strictly larger than the number of
uncolored neighbors of $v$. It thus follows from the definition of $A$
that each vertex $r_i$ can apply
Theorem~\ref {thm:Gallai} to its ball $B_R(r_i)$, and thus extend the current $L_H$-list-coloring to these
balls in $k\log n=2c\log^2 n=O(\log^2 n)$ additional steps. This concludes the proof of Lemma~\ref{lem:ext}.
\hfill $\Box$

\section{Conclusion}\label{sec:conclusion}

In Theorem~\ref{thm:mad}, every vertex has the same
number of colors in its list. However, as seen in
Theorem~\ref{thm:Gallai}, in the list-coloring setting, some results
can be obtained when vertices have a varying number of colors. Indeed,
many distributed $(\Delta+1)$-coloring algorithms work in the more
general $(\mbox{deg}+1)$-list-coloring setting, where each
vertex $v$ is given a list of $d(v)+1$ colors, and the complexity of
this specific problem plays an important role in the most efficient
distributed $(\Delta+1)$-coloring algorithms to date~\cite{CLP18,HSS16}.

There
are obvious obstacles to an efficient version of
Theorem~\ref{thm:Gallai} in the distributed setting: we cannot apply
it to paths in $o(n)$ rounds. Paths are not the only difficult case to
circumvent, as one could attach a clique to every vertex on a path and
face similar issues. Therefore, we need to assume that every vertex $v$ has a
list $L(v)$ of size $d(v)$, except if $d(v)\leq 2$ or the neighbors of $v$
form a clique, in which cases $v$ has a list $L(v)$ of size
$d(v)+1$. Such a list-assignment $L$ is said to be \emph{nice}. The
same algorithm and proof go through merely by replacing $d$ with the
size of the given vertex' list. Indeed, every vertex is rich, and thus
we obtain a complexity of $O(\Delta^2 \log^3 n)$ rounds.

\begin{thm}\label{thm:degchoos}
There is a deterministic distributed algorithm that given an
$n$-vertex graph $G$ of maximum degree $\Delta$, and a nice list-assignment $L$ for the vertices of $G$, finds an $L$-list-coloring of $G$ in $O(\Delta^2\log^3 n)$ rounds.
\end{thm}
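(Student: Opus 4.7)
The plan is to run the same algorithm designed for Theorem~\ref{thm:mad}, with the uniform list size $d$ replaced vertex-by-vertex by $|L(v)|$. Niceness guarantees $|L(v)| \geq d_G(v)$ for every $v$, so every vertex is rich in the sense of the main proof, and there are no poor vertices. Consequently only the second (sharper) branch of Lemma~\ref{lem:happy} is relevant, and the bound $|A| \geq n/(12\Delta + 1)$ will drive an iteration count of $O(\Delta \log n)$.

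The first step is to adapt the notion of happiness: declare $v$ \emph{happy} if the ball $B(v) = B^{c \log n}(v)$ is either not a Gallai tree or contains some $u$ with $|L(u)| \geq d_G(u) + 1$. Let $A$ be the happy set and $S = V \setminus A$. Every $v \in S$ must satisfy $|L(v)| = d_G(v)$, whence by niceness $d_G(v) \geq 3$ and $N_G(v)$ is not a clique; the same holds for every vertex in $B(v)$. This is exactly what the proof of Proposition~\ref{prop:S1} required of sad vertices, with $d$ replaced by the per-vertex list size. I would therefore rerun verbatim the Gallai tree dismantling of $G[S]$ to obtain $H$, then $H'$ by one-edge removal from each short cycle. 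The sole numerical change is that the count ``each $v \in S$ lies in at most $d/2$ local clique blocks'' becomes ``at most $\Delta/2$'' (since $d_G(v) \leq \Delta$), so $|V(H)| \leq (\Delta/2)|S|$. The girth-$5$ structure, short-cycle removal, and Corollary~\ref{cor:moore} then yield at least $|S|/12$ vertices of $S$ with a neighbor in $A$, and since every vertex of $A$ has degree at most $\Delta$, we get $|A| \geq |S|/(12\Delta)$ and therefore $|A| \geq n/(12\Delta + 1)$.

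Next I would adapt Lemma~\ref{lem:ext}. Build a $(k, k \log n)$-ruling forest with $k = 2c \log n$ in $G$ (since here $R = V$) with respect to $A$ using~\cite{AGLP89}, uncolor it, and strip external colors from lists to get new lists $L_H$. Observation~\ref{obs:recol}, read with $|L(v)|$ in place of $d$, gives $|L_H(v)| \geq d_H(v)$ for each uncolored $v$, strictly larger when $|L(v)| \geq d_G(v) + 1$. A $(\Delta+1)$-coloring of the uncolored subgraph computed in $O(\Delta \log n)$ rounds via~\cite{GPS88} guides a bottom-up color selection for non-root vertices of the ruling forest. Finally, for each root $r_i$, happiness guarantees that Theorem~\ref{thm:Gallai} applies to $B(r_i)$; since distinct roots' balls are disjoint and non-adjacent, these extensions proceed in parallel in $O(\log^2 n)$ additional rounds.

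Combining the two adapted lemmas, the algorithm terminates in $O(\Delta \log n) \cdot O(\Delta \log^2 n) = O(\Delta^2 \log^3 n)$ rounds. The main technical point, and the only place where niceness is genuinely used, is verifying in the adapted Proposition~\ref{prop:S1} that every sad vertex satisfies $|L(v)| = d_G(v)$ and has a non-clique neighborhood of size at least $3$; once this is established the combinatorial core of the proof transfers unchanged, and no new idea is needed for the extension step.
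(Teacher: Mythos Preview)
Your proposal is correct and is precisely the approach the paper takes: the paper's entire argument for Theorem~\ref{thm:degchoos} is the sentence ``The same algorithm and proof go through merely by replacing $d$ with the size of the given vertex' list. Indeed, every vertex is rich, and thus we obtain a complexity of $O(\Delta^2 \log^3 n)$ rounds,'' and you have carefully spelled out how each piece (the happiness definition, Proposition~\ref{prop:S1}, Lemma~\ref{lem:happy} in the poor-free case, and Lemma~\ref{lem:ext}) adapts under that substitution. Your identification of where niceness is genuinely needed---to ensure that every sad vertex $v$ has $d_G(v)\ge 3$ and non-clique neighborhood, so that a vertex lying in a single clique local block still has $d_{G[S]}(v)<d_G(v)$---is exactly the point that makes the transfer of Proposition~\ref{prop:S1} go through.
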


 We
note that this also implies Corollary~\ref{cor:brooks} and is another way to refine the result by Panconesi and
Srinivasan~\cite{PS95} (which is faster by a factor of $\Delta \log \Delta$).

\medskip

To conclude, observe that a major difference between coloring and list-coloring in the
distributed setting is that finding \emph{some} coloring is easy
(each vertex chooses its own identifier), while finding \emph{some} list-coloring might be non
trivial. This raises the following
intriguing question, which does not seem to have been considered
before (to the best of our knowledge).

\begin{qn}\label{qn:listn}
Given an $n$-vertex graph $G$, in which each vertex has a list of $n$
available
colors, is there a simple \emph{deterministic} distributed algorithm
list-coloring $G$ in, say, $O(\log n)$ rounds?
\end{qn}

Remark that there is a simple answer to Question~\ref{qn:listn} if we
ask for a randomized algorithm instead (see for instance~\cite{BE13} for the
description of a simple
randomized distributed $(\Delta+1)$-coloring algorithm in $O(\log n)$
rounds, which can be easily adapted to work in the list-coloring
setting).

\section*{Recent developments}

Since we made our manuscript public, two papers on close topics appeared. 

\medskip

In~\cite{GHKM18},
the authors were interested in $d$-coloring non-complete graphs of maximum degree
at most $d$ (this corresponds to the setting of our Corollary~\ref{cor:brooks}). They gave a deterministic distributed algorithm
with round complexity $O(d^{1/2} \log^2 n)$ (improving our bound of $O(d^2 \log^3
n)$), and a randomized distributed
algorithm with round complexity $d^{1/2+o(1)}
(\log \log n)^2$. Using known lower bounds on the round complexity
of the deterministic version of the problem (essentially $\Omega(\log n)$), this
implies an exponential separation between the deterministic and
randomized versions of the problem.

\medskip

In~\cite{CM18}, the authors were interested specifically in planar
graphs. They gave deterministic distributed algorithms to 4-color
triangle-free planar graphs and 6-color planar graphs in $O(\log n)$
rounds (improving our round complexity of $O(\log^3 n)$ from
Corollary~\ref{cor:planar}). They also gave a different proof of
Theorem~\ref{th:lplan}.

\begin{acknowledgement} The authors would like to thank Leonid
  Barenboim and Michael Elkin
  for the discussion about the parallel 5-coloring algorithm for
  planar graphs 
  of~\cite{GPS88} mentioned in~\cite{BE10}, Jukka Suomela for providing helpful references, Cyril Gavoille for his
  comments and suggestions, and Zden\v{e}k
  Dvo\v{r}\'ak and Luke Postle for the
  discussions about coloring embedded graphs of large edge-width.
\end{acknowledgement}


\begin{thebibliography}{99}


\bibitem{AHL02} N. Alon, S. Hoory and N. Linial, \emph{The Moore bound
    for irregular graphs}, Graphs Combin. {\bf 18} (2002), 53--57.

\bibitem{AHNNO01} D. Archdeacon, J. Hutchinson, A. Nakamoto,
  S. Negami, and K. Ota, \emph{Chromatic Numbers of Quadrangulations
    on Closed Surfaces}, J. Graph Theory {\bf 37} (2001), 100--114.

\bibitem{AGLP89} B. Awerbuch, A.V. Goldberg, M. Luby, and S. Plotkin,
  \emph{Network decomposition and locality in distributed
    computation}, In Proc. of the 30th Annual Symposium on Foundations
  of Computer Science (1989), 364--369.

\bibitem{BE10} L. Barenboim and M. Elkin, \emph{Sublogarithmic
    distributed MIS algorithm for sparse graphs using Nash-Williams
    decomposition}, Distributed Computing {\bf 22(5-6)} (2010),
  363--379.

\bibitem{BE13} L. Barenboim and M. Elkin, \emph{Distributed graph
    coloring: Fundamentals and recent developments}, Synthesis
  Lectures on Distributed Computing Theory {\bf 4(1)} (2013), 1--171.

\bibitem{BMT99} T. B\"ohme, B. Mohar, and M. Stiebitz, \emph{Dirac's
    map-color theorem for choosability}, J. Graph Theory {\bf 32} (1999), 327--339. 

\bibitem{Bor77} O. Borodin, \emph{Criterion of chromaticity of a degree prescription},  In
Abstracts of IV All-Union Conf. on Th. Cybernetics (1977), 127--128.

\bibitem{CLP18} Y.-J. Chang, W. Li, and S. Pettie, \emph{An optimal
    distributed $(\Delta+1)$-coloring algorithm?}, In Proc. of the
  50th ACM Symposium on Theory of Computing (STOC), 2018.


\bibitem{CM18} S. Chechik {and} D.  Mukhtar, \emph{Optimal Distributed Coloring Algorithms for Planar
    Graphs in the {LOCAL} model}, In Proc. of the ACM-SIAM Symposium
  on Discrete Algorithms (SODA), 2019.



\bibitem{ERT79} P. Erd\H{o}s, A. Rubin, and H. Taylor, \emph{Choosability in graphs}, In
Proc. West Coast Conf. on Combinatorics, Graph Theory and Computing,
Congressus Numerantium {\bf 26} (1979), 125--157.

\bibitem{DKM08} M. DeVos, K. Kawarabayashi, B. Mohar, \emph{Locally
 planar graphs are 5-choosable}, J. Combin. Theory Ser. B {\bf 98}
(2008), 1215--1232.

\bibitem{Fis78} S. Fisk, \emph{The non-existence of colorings},  J.~Combin.\ Theory Ser.~B~{\bf 24} (1978), 247--248.

\bibitem{G59} H.~Gr\"{o}tzsch, {\em Ein Dreifarbensatz f\"ur dreikreisfreie
  Netze auf der Kugel}, Wiss. Z. Martin-Luther-Univ. Halle-Wittenberg
  Math.-Natur. Reihe {\bf 8} (1959), 109--120.
  
\bibitem{Gal63} T. Gallai, \emph{Kritische Graphen I}, Magyar
  Tud. Akad. Mat. Kutak\'o Int. K\"ozl {\bf 8} (1963), 165--192.


\bibitem{GHKM18} {M. Ghaffari}, {J. Hirvonen}, {F. Kuhn}, {and}
  {Y. Maus}, \emph{Improved Distributed {$\Delta$}-Coloring}, ACM
  Symposium on Principles of Distributed Computing (PODC), 2018.

\bibitem{GL17} M. Ghaffari and C. Lymouri, \emph{Simple and
    Near-Optimal Distributed Coloring for Sparse Graphs}, 
International Symposium on Distributed Computing (DISC) 2017.

\bibitem{GPS88} A. Goldberg, S. Plotkin, and G. Shannon,
  \emph{Parallel symmetry-breaking in sparse graphs}, SIAM J. Discrete
  Math. {\bf 1(4)} (1988), 434--446.

\bibitem{HSS16} D. Harris, J. Schneider, and H.-H. Su,
  \emph{Distributed $(\Delta+1)$-coloring in sublogarithmic rounds},
  In Proc. of the 48th ACM Symposium on Theory of Computing (STOC) 2016, 465--478.

\bibitem{KM95} S. Klav\v zar and B. Mohar, \emph{The chromatic numbers of graph bundles over cycles}, Discrete Math. {\bf 138} (1995), 301--314.

\bibitem{L92} N. Linial, \emph{Locality in distributed graph
    algorithms}, SIAM J.  Comput. {\bf 21} (1992), 193--201.

 \bibitem{MT} B.~Mohar and C.~Thomassen, \emph{Graphs on Surfaces}.
  Johns Hopkins University Press, Baltimore, 2001.

\bibitem{NW64} C.St.J.A. Nash-Williams, \emph{Decomposition of finite
    graphs into forests}, J. London Math. Soc. {\bf 1(1)} (1964), 12--12.

\bibitem{PS95} A. Panconesi, and A. Srinivasan, \emph{The local nature
    of $\Delta$-coloring and its algorithmic applications},
  Combinatorica {\bf 15} (1995), 255--280.

\bibitem{PS96} A. Panconesi and A. Srinivasan, \emph{On the complexity
    of distributed network decomposition}, J. Algor. {\bf 20(2)}
  (1996), 356--374.

\bibitem{PT16} L. Postle and R. Thomas, \emph{Hyperbolic families and coloring graphs on surfaces}, Manuscript, 2016. {\tt ArXiv:1609.06749}

\bibitem{T93} C.~Thomassen, \emph{Five-coloring maps on surfaces}, J.~Combin.\ Theory Ser.~B~{\bf 59} (1993), 89--105.

\bibitem{T94} C.~Thomassen, {\em Every planar graph is 5-choosable},
J.~Combin.\ Theory Ser.~B~{\bf 62} (1994), 180--181.

\bibitem{V93} M. Voigt, \emph{List colourings of planar graphs}, Discrete Math. {\bf 120} (1993), 215--219.

\bibitem{V95} M.~Voigt, {\em A not 3-choosable planar graph without
  3-cycles}, Discrete Math.~{\bf 146} (1995),
325--328.

\end{thebibliography}
\end{document}